\newcommand{\blind}{1}
\newtheorem{thm}{Theorem}[section]
\newtheorem{lem}[thm]{Lemma}
\newtheorem{definition}[thm]{Definition}
\newtheorem{prop}[thm]{Proposition}
\newtheorem{corol}[thm]{Corollary}
\DeclareMathOperator{\ACT}{ACT}	% the adjusted correlation thresholding
\DeclareMathOperator*{\argmin}{arg\,min} % the argmin
\DeclareMathOperator{\BS}{BS}	% the broken-stick rule
\DeclareMathOperator{\Corr}{\mathbb{C}orr} % correlation of two random variables
\DeclareMathOperator{\Cov}{\mathbb{C}ov} % covariance of two random variables
\DeclareMathOperator{\diag}{diag}      % the diagonal matrix constructor
\DeclareMathOperator{\Exp}{\mathbb{E}} % the Expectation
\DeclareMathOperator{\ExpF}{\mathbb{E}_{|\bF}} % Conditional Expectation knowing \bF
\DeclareMathOperator{\MedianF}{\mathbb{M}_{|\bF}} % Conditional Median knowing \bF
\DeclareMathOperator{\rank}{rank}	 % the rank
\DeclareMathOperator{\trace}{trace}	 % the trace
\DeclarePairedDelimiter{\abs}{|}{|} % the absolute value
\DeclareMathOperator{\var}{\mathbb{V}} % the Variance
\DeclareMathOperator{\varF}{\mathbb{V}_{|\bF}} % Conditional Variance knowing \bF
\def\A{\mathbf{A}}		% a (real) matrix
\def\B{\mathbf{B}}		% a matrix
\def\b{\bm{b}}
\def\bDelta{\bm{\Delta}}	% a scaling diagonal matrix of a CLFM
\def\bell{\bm{\ell}}		% the factor loading vector of a CLFM
\def\bL{\mathbf{L}}     % the matrix of loading factor vectors
\def\bF{\mathbf{F}}		% the matrix of common factors of a CLFM
\def\bg{\mathbf{g}}		% \bg^i be the i-th vector of \bLambda
\def\bGamma{\bm{\Gamma}}	% the population coefficient of a CLFM
\def\bH{\mathbf{H}}		% a real symmetric matrix
\def\bmu{\bm{\mu}}		% the population mean of a CLFM
\def\bM{\mathbf{M}}		% the matrix with the columns being the population mean of a CLFM
\def\bmx{\bar{\x}}		% the sample average vector
\def\bN{\mathbf{N}}		% a Hermitian matrix
\def\bR{\mathbf{R}}		% the population correlation matrix
\def\bPsi{\mathbf{\Psi}}	% the idiosyncratic noise matrix
\def\bLambda{\mathbf{\Lambda}}	% the noise coefficient matrix
\def\bgamma{\bm{\gamma}}	% \bgamma^k is the $k$th row vector of $\bGamma$
\def\bSigma{\bm{\Sigma}}	% the population covariance matrix
\def\C{\mathbf{C}}		% the sample correlation matrix
\def\c{c}			% the dimension-to-sample-size ratio
\def\cas{\mathrel{\stackrel{a.s.}{\longrightarrow}}} % almost sure convergence
\def\D{\mathbf{D}}  
\def\dd{\mathrm{d}}		% exterior differentiation
\def\eit{\psi_{i \j}}		% the i-th idiosyncratic component at time \j
\def\fkt{f_{\k \j}}		       % factor
\def\fra#1#2{{#1}/{#2}}		       % fraction
\def\H{\mathbf{H}}		       % A Hermitian matrix
\def\harmonic#1{\sum_{n=#1}^\p n^{-1}}	% The sum of the inverses of #1 ... \p.
\def\hbPsi{\hat\bPsi}			% The re-centered truncation of
\def\hbx{\hat\x}			% A trend computed by HP-filter.
\def\heit{{\hpsi}_{i\j}}		% (i,\j)-entry of \hatbPsi
\def\hpsi{{\hat\psi}}
\def\I{\mathbf{I}}		% The identity matrix
\def\Indctr#1{\mathrm{I}\left(#1\right)}	 % The indicator function
\def\j{t}			 % index for time
\def\k{k}			 % variable ranging over [1, K]
\def\krr{\p/\n\to \c}		 % Kolmogorov regime ratio
\def\Krr{\frac{\p}{\n}\to\c}	 % Kolmogorov regime ratio (displaystyle)
\def\kr{{\p,\n\to\infty}}	 % Kolmogorov regime ratio (simple
\def\lam#1#2{\lambda_{#1}\left(#2\right)}
\def\lamLC#1{\lambda_{r+1}\left(#1\right)} % the largest bounded eigenvalue of $1$
\def\liminf{\varliminf}		% liminf
\def\limsup{\varlimsup}		% limsup
\def\M{\mathbf{M}}		% A Hermitian matrix
\def\mbF{\bar{\bF}}
\def\mbPsi{\bar{\bPsi}}
\def\mX{\bar{\X}}		
\def\mx{\bar{x}}		% \mx_i be the sample average of the $i$th row of $\X$
\def\mZ{\bar{\Z}}		% the time average of a random matrix \Z
\def\n{T}	   % the sample size
\def\norm#1{\left\|#1\right\|}	% the Euclidean norm
\def\o{\bm{0}}                  % null vector
\def\ones#1{\mathbf{1}_{#1}}
\def\P{\mathbf{P}}		% Probability
\def\p{N}			% the dimension of a data
\def\Rset{{\mathbb R}}    	% the set of real numbers
\def\rangei{{1\le i\le \p}}	% the range of an index i from 1 to \p
\def\rangej{{1\le\j\le \n}}	% the range of an index \j from 1 to \n
\def\rangek{{1\le \k\le K}}	% the range of an index k from 1 to K
\def\rangekr{{1\le \k\le r}}	% the range of an index k from 1 to K
\def\S{\mathbf{S}}		% the sample covariance matrix
\def\snorm#1{\left|\!\left|\!\left|#1\right|\!\right|\!\right|} % spectral norm
\def\sumj{\sum_{\j=1}^\n}	% the summation over \j
\def\summ{\sum_{\k=1}^K}	% the summation over k from 1 to K
\def\sv#1#2{s_{#1}\left(#2\right)} % the #1-th singular value of #2
\def\tbx{\tilde\x}      % trend 
\def\tC{\tilde\C}		% noncentered sample correlation matrix
\def\tD{\tilde\D}
\def\tS{{\tilde\S}}		% noncentered sample covariance matrix
\def\tx{\tilde x}      % component of trend 
\def\X{\mathbf{X}}         % the data matrix of a CLFM
\def\x{\bm{x}}		   % a vector associated to the data matrix \X
\def\xit{x_{i \j}}	   % the (i,\j)-entry of data matrix \X of a CLFM
\def\y{\bm{y}}		   % a time-series
\def\zit{z_{i \j}}	   % the (i,\j)-entry of a random matrix \Z of a CLFM
\def\Z{\mathbf{Z}}	   % the random matrix of a CLFM
\begin{document}

\def\spacingset#1{\renewcommand{\baselinestretch}%
{#1}\small\normalsize}

\if1\blind
{
  \title{\bf Asymptotic locations of bounded and unbounded eigenvalues of sample correlation matrices of certain factor models \\ -- application to a components retention rule}
  \author{Yohji Akama%\thanks{    The authors gratefully acknowledge}
\hspace{.2cm}\\
    The Mathematical Institute, Tohoku University, \\
    Aramaki, Aoba,
Sendai, 980-8578, Japan\\
     \\
   Peng Tian \\
    Laboratoire Jean Alexandre Dieudonn\'e, Universit\'e C\^ote
    d'Azur, \\
    28, Avenue Valrose, 06108 Nice Cedex 2, France}
  \maketitle
  } 
\fi

\if0\blind
{
  \bigskip
  \bigskip
  \bigskip
  \begin{center}
    {\LARGE\bf Asymptotic locations of bounded and unbounded eigenvalues of sample correlation matrices of certain factor models \\ -- application to a components retention rule}
\end{center}
  \medskip
} \fi

\bigskip
\begin{abstract}
Let the dimension $\p$ of data and the sample size $\n$ tend to $\infty$
with $\p/\n \to c > 0$.  The spectral properties of a sample correlation
matrix $\C$ and a sample covariance matrix $\S$ are asymptotically equal
whenever the population correlation matrix $\bR$ is
bounded~\citep{karoui09concentration}. We demonstrate this also for
general linear models for \emph{unbounded} $\bR$, by examining
the behavior of the singular values of multiplicatively perturbed
matrices.  By this, we establish: Given a factor model of an
idiosyncratic noise variance $\sigma^2$ and a rank-$r$ factor loading
matrix $\bL$ which rows all have common Euclidean norm $L$.  Then, the
$k$th largest eigenvalues $\lambda_k$ $(1\le k\le \p)$ of $\C$ satisfy
almost surely: (1) $\lambda_r$ diverges, (2) $\lambda_k/s_k^2\to1/(L^2 +
\sigma^2)$ $(1 \le k \le r)$ for the $k$th largest singular value $s_k$
of $\bL$, and (3) $\lambda_{r + 1}\to(1-\rho)(1+\sqrt{c})^2$ for $\rho
:= L^2/(L^2 + \sigma^2)$.  Whenever $s_r$ is much larger than
$\sqrt{\log\p}$, then broken-stick
rule~\citep{Frontier76:_etude,jackson1993stopping},
which estimates $\rank \bL$ by a random
partition (Holst, 1980) of $[0,\,1]$, tends to
$r$ (a.s.).
We also provide a natural factor model where the rule tends
 to ``essential rank'' of $\bL$ (a.s.) which is smaller than $\rank \bL$.
\end{abstract}

\noindent%
{\it Keywords:}  unbounded eigenvalues, the largest bounded eigenvalue, high-dimensional modeling, multiplicative matrix perturbation, correlation

\section{Introduction}
\label{sec:intro}

In multivariate
analysis~\citep{anderson,fujikoshi2011multivariate,muirhead2009aspects},
covariance matrices are important objects, for example, in principal
component analysis (PCA).  Motivated by this, research on sample
covariance matrices has a long history and an abundance of results have
been established, e.g. various Wishart distributions.

Several well-known methods in multivariate analysis, however, become inefficient or even misleading when the data dimension $\p$ is large.
To deal with such large-dimensional data, a novel approach in asymptotic statistics has been developed where the data dimension $\p$ is no longer fixed but tends to infinity together with the sample size $\n$. 
The feature of this limiting regime are discussed in \cite[Section~1.1]{yao} based on real datasets such as portfolio, climate survey, speech analysis, face recognition, microarrays, signal detection, etc.
In this paper, we assume a high-dimensional regime in which the ratio $\p/\n$,
of dimension $\p$ to sample size $\n$, converges to a positive constant $c$.

One of milestone work about the spectral properties of sample covariance
matrices in this limiting regime was the discovery of
Mar\v{c}enko-Pastur distribution, and the
determination~\citep{YBK88,bai-yin2} of the largest and the smallest
eigenvalues of sample covariance matrices $\S$ for i.\@i.\@d.\@ data
under the finite fourth moment condition. \cite{baik06:_eigen} studied
asymptotic locations of the eigenvalues of $\S$ of a spiked eigenvalue
model~\citep{10.1214/aos/1009210544}, but the largest eigenvalues of the
population covariance matrix $\bSigma$ are bounded. We refer the reader
to \citep{bai2010spectral,PAUL20141,yao}.

In some cases, for example, when data are measured in different units, it is more appropriate to utilize sample correlation matrices~\citep{J,jolliffe2016principal,JohnsonWichern} because sample correlation matrices $\C$ are invariant under scaling and shifting. 
Moreover, the assumption of i.\@i.\@d.\@ data is not very acceptable; Practitioners often hope to be in the presence of a curious covariance structure. 
However, the asymptotic spectral properties of a sample correlation matrix $\C$ in the high-dimensional regime have not been sufficiently investigated, compared to sample covariance matrices $\S$.

We review typical asymptotic results of the spectral properties of sample correlation matrices $\C$ here. 
For i.\@i.\@d.\@ case, under the finite fourth moment condition,
\cite{10.2307/25053330} showed that the extreme
eigenvalues of the sample correlation matrix $\C$ converge almost
surely to $(1\pm \sqrt{c})^2$, and \cite{bai-zhou} showed that the limiting spectral distribution is
the standard Mar\v{c}enko-Pastur distribution under a weaker assumption.

In a class of spiked models, \cite{morales2021asymptotics} derived asymptotic first-order and distributional results for spiked eigenvalues and eigenvectors of sample correlation
matrices $\C$. More specifically, 
they found that the first-order spectral properties of sample correlation matrices $\C$ match those of sample covariance matrices $\S$, whilst their asymptotic distributions can differ significantly. 

\cite{karoui09concentration} revealed that the first-order asymptotic behavior of the spectra of $\C$ is similar to that of $\S$, for unit variance data of a general linear model, except that this similarity requires the boundedness of the population correlation matrix $\bR$, a condition not met by some factor models. 

The problem is that the boundedness of the population covariance matrices $\bSigma$ is not
always satisfied in econometrics,
finance~\citep{chamberlain83:_arbit_factor_struc_mean_varian,bai02:_deter_the_number_offac_in},
genomics, and stationary long-memory processes. One of the features of the
unbounded population covariance matrices $\bSigma$ is the consistency of the eigenvectors~\citep{yata13:_pca,MR3556768,10.1214/16-AOS1487}, although the asymptotic location and the fluctuation of the
largest unbounded eigenvalues of $\C$ are not available yet even for the equi-correlated normal population, which has the simplest unbounded population covariance matrix $\bSigma$.

Unbounded covariance/correlation matrices in high-dimensional problems are studied with a spiked model~\citep{yata13:_pca}, a time-series model~\citep{Merlev_de_2019} or a factor model~\citep{10.1214/18-AOS1798,10.1214/16-AOS1487}.
Following the latter articles for unbounded sample covariance matrices, we study factor models for unbounded sample correlation matrices. 

Our target model is a \emph{$K$-factor model} 
$$\X=\begin{bmatrix} \bmu &\dots & \bmu
\end{bmatrix} + \bL\bF + \bLambda\bPsi$$ 
with $\bmu\in\Rset^\p$, $\bL\in\Rset^{\p\times K}$, $\bLambda\in\Rset^{\p\times\p}$ being deterministic, $K$ a fixed positive integer, $\bF=[f_{i\j}]\in\Rset^{K\times\n}$  and $\bPsi=[\psi_{i\j}]\in\Rset^{\p\times\n}$ two random matrices.
Here we assume that the entries of $\bF$ (\emph{factors}) are i.i.d.\@ centered random variables with unit variance, and so are the entries of $\bPsi$ (\emph{noises}). The entries of $\bF$ are independent from the entries of $\bPsi$, but \emph{$f_{11}$ and $\psi_{11}$ are not necessarily identically distributed}. All rows of $\bGamma:= \begin{bmatrix}
    \bL & \bLambda
\end{bmatrix}$ are nonzero.

Our fundamental result is Theorem~\ref{thm:asymptotic equi-spectral}: for every $K$-factor model, 
if the entries $\eit$ of the noise matrix $\bPsi$ have finite fourth moments
and $\bLambda$ is diagonal, or if there exists $\varepsilon>0$ such that $\Exp\left(\,|\eit|^4(\log|\eit|)^{2+2\varepsilon}\right)<\infty$, 
then the eigenvalues of the sample correlation
matrices $\C$ and those of the sample covariance matrices $\S$ with unit-variance data are asymptotically equal, which extends the result of \cite{karoui09concentration}.

In PCA, methods for estimating the number of factors in a sample have been
studied
by~\citep{bai02:_deter_the_number_offac_in,10.1214/12-AOS970,ait2017using}
in econometrics, etc. 

Meanwhile, rules for the retention of principal components have been proposed in many lectures (see, e.g., \citep{jackson1993stopping}). \emph{Broken-stick (BS)
rule}~\citep{Frontier76:_etude,jackson1993stopping} is
a peculiar rule among such rules. BS rule compares the spectral
distribution of $\C$, with a distribution of the mean lengths of the
subintervals obtained by a random partition of [0,\, 1]~\citep{Holst80}. Specifically, for the number of factors or
significant principal components, the BS rule provides $i-1$ where $i$
is the lowest among $[1,\,\p]$ for which the $i$th largest eigenvalue of
$\C$ does not exceed the sum $1/i + 1/(i+1) + \cdots + 1/\p$.  The BS
rule depends on the number and growth rates of \emph{unbounded}
eigenvalues of $\C$. 
The idea of the BS rule, initially coming from the species occupation model in an ecological system, is not evident in relation to the distribution of eigenvalues of $\C$.

First, we study the asymptotic spectral properties of $\C$, through the fundamental theorem of $K$-factor model~(Theorem~\ref{thm:asymptotic equi-spectral}) and techniques from the random matrix theory, for two illustrative $K$-factor models generalizing the equi-correlated normal population. 
One of the two is a $K$-factor model such that $\bLambda=\sigma\I_\p$ and the rows of the factor loading matrix $\bL$ have a common length. We call this a \emph{constant length factor loading model}~(CLFM). 
The other is a $K$-factor model such that the rows of $\bL$ and the diagonal entries of $\bLambda$ are convergent. This model was introduced in \cite{Akama23:_correl}, and is called an \emph{asymptotic convergent factor model}~(ACFM) here.
We establish that under certain mild conditions, the BS rule precisely matches the principal rank of the factor loading matrix $\bL$ for a CLFM (see Theorem~\ref{thm:CLFM_BS}), but the \emph{essential rank} of $\bL$ for an ACFM (see Theorem~\ref{thm:asympt_CLF}). 

Then we calculate the BS
rule and some modern factor number estimators such as the
\emph{adjusted correlation thresholding}~\citep{FGZ22} and Bai-Ng's rule~\citep{bai02:_deter_the_number_offac_in}
based on an information criterion, for financial datasets and a biological dataset~\citep{querde}. 
The financial datasets 
obtained by \cite{FGZ22} from Fama-French 100 portfolio~\citep{FAMA19933} by cleaning-up outliers.  
The biological dataset is a binary multiple sequence alignment~(MSA) of HCV genotype 1a (prevalent in North America) publicly available from the Los Alamos National Laboratory
database~\citep{10.1093/bioinformatics/bth485}.

\iffalse
For an arbitrary CLFM, we establish the following: If the entries of $\bPsi$ have the finite fourth moments, and the nonzero singular value of $\bL$ is divergent, then the $k$th eigenvalue of $\C$ over the squared $k$th largest singular value of $\bL$ converges almost surely to $1/(L^2+\sigma^2)$ for $1\le k\le r:=\rank \bL$, and that the
 $(r+1)$st largest eigenvalue of $\C$ converges almost surely to
 $\sigma^2(1+\sqrt{c})^2/(L^2+\sigma^2)$. If furthermore the squared $r$th singular
 value of $\bL$ grows faster than $\log\p$, then the \emph{broken-stick
 rule}~\citep{Frontier76:_etude,jackson1993stopping}, 
 
 For an arbitrary ACFM, we demonstrate the following: if all the rows of $\bGamma$ are nonzero, the entries of
 $\bPsi$ have finite fourth moments, and the sum of the squared Euclidean
 errors of
 the $i$th rows of $\bL$ from the limiting vector $\bell$ $(1\le i\le \p)$ is $o(\log\p)$,
 then the broken-stick rule converges almost surely to the indicator
 function of $\bell\ne\o$.
\fi
 
The structure of this paper is as follows. Section~\ref{sec:model}
formally introduces the model and presents the main theorems, with
proofs provided in Section~\ref{sec:proof_theorems} of the supplementary
material. Numerical analysis is performed on actual financial and
biological datasets in Section~\ref{sec:empirical
study}. Section~\ref{sec:conclusion} is the
conclusion. Section~\ref{sec:useful lemma} of the supplementary material
includes several important lemmas derived from general random matrix
theories.

\section{Models and theoretical results} \label{sec:model}
In this paper we consider the following model of data matrix:
\begin{definition}[$K$-Factor model] \label{def:K-FM}Let
 $K,\p,\n$ be nonnegative integers. A $K$-factor model is a random matrix $\X \in{\Rset}^{\p\times \n}$ in the form 
\begin{align*}
    \X := \bM + \bL\bF+\bLambda \bPsi,
\end{align*}
    where
    \begin{enumerate}
        \item \label{def:bM} the deterministic matrix $\bM=\begin{bmatrix}
    \bmu &\dots & \bmu
\end{bmatrix}\in\Rset^{\p\times \n}$ is the {\em theoretical mean}, with $\bmu\in\Rset^\p$;
        \item the deterministic matrices $\bL\in\Rset^{\p \times K}$ and $\bLambda\in\Rset^{\p\times \p}$ are called the {\em factor loading matrix} and {\em noise coefficient matrix}, and the rows of\/ $\bL$ are called {\em factor loading vectors};
        \item there are two independent sets of i.\@i.\@d.\@ random variables $\{f_{i\j}\}_{1\le i\le K,\j\ge 1}$ and $\{\eit\}_{i,\j\ge 1}$ with mean zero and variance one, such that
        $$\bF=[f_{i\j}]_{1\le i\le K,1\le \j\le \n},\quad \bPsi=[\eit]_{1\le i\le \p,1\le \j\le \n}.$$
        The two random matrices $\bF$ and $\bPsi$ are called {\em factor matrix} and {\em idiosyncratic noise matrix}, respectively.
    \end{enumerate}
\end{definition}

We often write $\X$ in a compact form
$$\X = \bM + \bGamma\Z$$
with
$$\bGamma = \begin{bmatrix}
    \bL & \bLambda
\end{bmatrix}\quad \text{and}\quad \Z=\begin{bmatrix} \bF \\ \bPsi \end{bmatrix}.$$
Let $\x_j$ be the $j$th column of $\X$. Then $\x_1,\dots,\x_\n$ are i.\@i.\@d.\@ random vectors. We recall that the population covariance and correlation matrices are $[\Cov(x_{i1},x_{k1})]_{\p\times \p}$ and $[\Corr(x_{i1},x_{k1})]_{\p\times \p}$, respectively, where $x_{ij}$ is the $i$th component of the vector $\x_j$. Note that under Definition~\ref{def:K-FM}, the population covariance matrix is
$$\bSigma:=\bGamma\bGamma^\top = \bL\bL^\top+\bLambda\bLambda^\top,$$
and the population correlation matrix is 
$$\bR:=\bDelta^{-\frac{1}{2}}\bSigma\bDelta^{-\frac{1}{2}},$$
where 
\begin{align*}\bDelta=\diag(\var(x_{11}),\dots,\var(x_{\p 1}))
\end{align*}
is a diagonal matrix containing the variance of components of $\x_1$. Note that $\var(x_{k1})$ is just the square of Euclidean norm of the $k$th row vector $\bgamma^k$ of $\bGamma$, and is also the $k$th diagonal element of $\bSigma$. So we can write $\bDelta=\diag(\bSigma)$, where for a square matrix $\A$, $\diag(\A)$ denotes the diagonal matrix with the same diagonal as $\A$.

The population covariance matrix $\bSigma$ and correlation matrix $\bR$ play important roles in multivariate statistics. However they are not always available. In order to estimate them, we define the {\em theoretically-centered} sample covariance matrix $\tS$ as
\begin{align}
\tS&=\frac{1}{\n}(\X-\bM)(\X-\bM)^\top.\label{eq:def_tS}\end{align}
Let $\tD=\diag(\tS)$. Then the {\em theoretically-centered} sample correlation matrix $\tC$ is defined as (see e.g. \cite{karoui09concentration})
\begin{align*}
\tC&=\tD^{-\frac{1}{2}}\tS \tD^{-\frac{1}{2}}.
\end{align*}

As the mean vector $\bmu$ is not always known either, we sometimes need to replace $\bmu$ by the sample mean 
\begin{align*}\bmx=\frac{1}{\n}\sumj \x_\j,\end{align*}
and use it to define the {\em data-centered} sample covariance matrix
\begin{align*}
\S=\frac{1}{\n-1}(\X-\mX)(\X-\mX)^\top, 
\end{align*}
where $\mX=\begin{bmatrix}\bmx & \cdots &\bmx\end{bmatrix}_{\p\times \n}$, and the {\em data-centered} sample correlation matrix 
\begin{align*}
   \C=\D^{-\frac{1}{2}}\S\D^{-\frac{1}{2}}, 
\end{align*}
where $\D=\diag(\S)$.

In this paper we focus on the sample correlation matrices. Note that if one row of $\bGamma$ is identically $0$, then the corresponding row of $\X$ is deterministically equal to the mean, and the correlation between this row and any other row is not defined. But it is easy to recognize such a row in the data and to eliminate it before further treatment. So we can assume 
\begin{enumerate}[series=assumptions,leftmargin=*, label={\bf A\arabic{enumi}}]
    \item \label{ass:Gamma_row_ne0}The rows of $\bGamma$ are nonzero:
$$\bgamma^i \ne \o, \quad i=1,\dots,\p.$$
\end{enumerate}

We study the limiting locations of eigenvalues of sample correlation matrices $\C$ and $\tC$ in the \emph{proportional limiting regime} 
\begin{align*}
 \kr,\ \Krr > 0,
\end{align*}
which will be denoted as
$$\kr$$
for simplicity. In this regime, Theorem~1 in \cite{karoui09concentration} related the spectral properties (limiting spectral distributions and limit locations of individual eigenvalues) of $\C$ and $\tC$ to those of sample covariance matrices $\bDelta^{-\frac{1}{2}}\S\bDelta^{-\frac{1}{2}}$ and $\bDelta^{-\frac{1}{2}}\tS\bDelta^{-\frac{1}{2}}$, in condition that $\snorm{\bDelta^{-\frac{1}{2}}\bGamma}$, the spectral norm of $\bDelta^{-\frac{1}{2}}\bGamma$, are uniformly bounded. However, due to the presence of $\bL$, this condition is not satisfied by some factor models. For example, let us consider an ENP studied in~\citep{fan19:_larges,AH22,Akama23:_correl} and defined in the following Definition~\ref{def:ENP}. Then $\snorm{\bDelta^{-\frac{1}{2}}\bGamma}=\sqrt{(L^2\p+\sigma^2)/(L^2+\sigma^2)}$ which diverges to $\infty$ as $\p\to\infty$.
\begin{definition}
    \label{def:ENP}
An \emph{equi-correlated normal population} (ENP for short) is a $1$-factor model such that (1) $\bL=[L \cdots L]^\top\in\Rset^{\p}$ for some $L>0$; and (2) $\bLambda = \sigma \I$ for some $\sigma>0$, and (3) $\fkt$ $(\rangek=1)$ and $\eit$ $(\rangei,\, \rangej)$ are independent standard normal random variables. For an ENP, we define $\rho=L^2/(L^2+\sigma^2)$.
\end{definition}

In this paper, for the $i$th  largest singular value $\sv{i}{\A}$ of a matrix
$\A$, we prove the following Theorem in
Section~\ref{sec:proof_theorems}:

\begin{thm}\label{thm:perturbation_multiplicative}Let $\A$ and $\B$ be complex matrices of order $m\times n$ and $n\times p$. 
\begin{align*}\sv{m}{\A}\sv{i}{\B}&\le \sv{i}{\A\B}\le \sv{1}{\A}\sv{i}{\B}&(i\ge1).\end{align*}
\end{thm}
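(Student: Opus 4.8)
The plan is to read both inequalities off the Courant--Fischer min--max description of the singular values. For a matrix $\M$ with $p$ columns, and for $i$ in the admissible range, one has, with $V$ ranging over linear subspaces of $\mathbb{C}^{p}$,
\[
\sv{i}{\M}=\min_{\dim V=p-i+1}\ \max_{\substack{\bm{v}\in V\\ \norm{\bm{v}}=1}}\norm{\M\bm{v}}=\max_{\dim V=i}\ \min_{\substack{\bm{v}\in V\\ \norm{\bm{v}}=1}}\norm{\M\bm{v}}.
\]
The structural point is that $\B$ and $\A\B$ have the \emph{same} number $p$ of columns, so a subspace of $\mathbb{C}^{p}$ that is optimal in the variational problem for $\B$ is automatically an admissible competitor in the corresponding problem for $\A\B$; the only analytic inputs are the two pointwise estimates $\norm{\A\bm{w}}\le\sv{1}{\A}\norm{\bm{w}}$ and $\norm{\A\bm{w}}\ge\sv{m}{\A}\norm{\bm{w}}$ for $\bm{w}\in\mathbb{C}^{n}$.

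For the upper bound, choose $V^{*}\subseteq\mathbb{C}^{p}$ with $\dim V^{*}=p-i+1$ attaining the minimum in the first formula applied to $\B$, so that $\max_{\bm{v}\in V^{*},\,\norm{\bm{v}}=1}\norm{\B\bm{v}}=\sv{i}{\B}$. Then for every unit $\bm{v}\in V^{*}$, $\norm{\A\B\bm{v}}\le\sv{1}{\A}\norm{\B\bm{v}}\le\sv{1}{\A}\,\sv{i}{\B}$, and since $V^{*}$ is one admissible subspace in the min--formula for $\sv{i}{\A\B}$ we conclude $\sv{i}{\A\B}\le\sv{1}{\A}\,\sv{i}{\B}$. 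For the lower bound, choose instead $V^{*}\subseteq\mathbb{C}^{p}$ with $\dim V^{*}=i$ attaining the maximum in the max--min formula for $\B$, so that $\min_{\bm{v}\in V^{*},\,\norm{\bm{v}}=1}\norm{\B\bm{v}}=\sv{i}{\B}$. Then for every unit $\bm{v}\in V^{*}$, $\norm{\A\B\bm{v}}\ge\sv{m}{\A}\norm{\B\bm{v}}\ge\sv{m}{\A}\,\sv{i}{\B}$, and since $V^{*}$ is admissible in the max--min formula for $\sv{i}{\A\B}$ we conclude $\sv{i}{\A\B}\ge\sv{m}{\A}\,\sv{i}{\B}$.

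The dimension bookkeeping and the admissible range $1\le i\le\min(m,n,p)$ --- beyond which the singular values in play vanish by the usual convention that trailing singular values are $0$, and the chain degenerates --- are routine. The step I expect to be the crux is the second pointwise estimate $\norm{\A\bm{w}}\ge\sv{m}{\A}\norm{\bm{w}}$: this holds for every $\bm{w}\in\mathbb{C}^{n}$ when $\A$ is square or tall (with the convention $\sv{m}{\A}=0$ if $m>n$), in particular for the square diagonal rescaling matrices $\tD^{-1/2}\bDelta^{1/2}$ through which the theorem will later be applied, whereas for a wide $\A$ (i.e.\ $m<n$) it must be read with $\bm{w}$ and $\sv{m}{\A}$ restricted to the row space of $\A$. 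Pinning down this convention cleanly is the one place the argument needs to be stated with care.
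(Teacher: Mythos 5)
Your Courant--Fischer argument is sound and takes a genuinely different route from the paper's. The paper obtains the upper bound as the $i=0$ case of Fan's product inequality $\sv{i+j+1}{\A\B}\le\sv{i+1}{\A}\sv{j+1}{\B}$ (its Proposition on products of singular values), and the lower bound by introducing the Moore--Penrose inverse $\A^+=\A^*(\A\A^*)^{-1}$, invoking $\A\A^+=\I_m$ to express $\sv{i}{\B}$ as a singular value of a product involving $\A\B$ and $\A^+$, and then applying the upper bound together with $\sv{1}{\A^+}=1/\sv{m}{\A}$. Your route needs no pseudo-inverse and no product inequality, and --- more importantly --- it isolates exactly where the shape of $\A$ enters, namely in the pointwise estimate $\norm{\A\bm{w}}\ge\sv{m}{\A}\norm{\bm{w}}$.

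The caveat you raise at the end is not a matter of convention to be ``pinned down'': it is a genuine restriction, and the stated inequality is false without it. For wide $\A$ with $\sv{m}{\A}>0$, take $\A=\begin{bmatrix}1&0\end{bmatrix}$ and $\B=\begin{bmatrix}0&1\end{bmatrix}^\top$; then $\sv{1}{\A}=\sv{1}{\B}=1$ while $\A\B=0$, so the lower bound fails at $i=1$. Restricting $\bm{w}$ to the row space of $\A$ does not repair your argument, because $\B\bm{v}$ has no reason to lie in that row space. It is worth knowing that the paper's own proof of the lower bound breaks at exactly the same point: the identity $\sv{i}{\B}=\sv{i}{\B\A\A^+}$ is not even dimensionally well formed ($\B\A$ requires $p=m$), and its intended reading $\sv{i}{\B}=\sv{i}{\A^+\A\B}$ requires $\A^+\A=\I_n$, i.e.\ $\A$ injective, which together with the full row rank implied by $\sv{m}{\A}>0$ forces $m=n$. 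So the lower bound is trivial for $m>n$ (where $\sv{m}{\A}=0$), correct and nontrivial for square invertible $\A$ --- the only case used in the paper, where $\A$ is always a diagonal rescaling matrix --- and false in general for $m<n$. Both your write-up and the theorem statement should carry the hypothesis $m\ge n$ explicitly.
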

Thanks to this theorem, we managed to generalize Theorem~1 in \cite{karoui09concentration} to general $K$-factor models with possibly unbounded $\bDelta^{-\frac{1}{2}}\bGamma$. 

Before stating our first main result, we add some additional assumptions.
\begin{enumerate}[series=assumptions,resume,leftmargin=*, label={\bf A\arabic{enumi}}]
    \item \label{ass:moment4} Each random variable $\eit$ has finite forth moment:
$$\Exp\left(|\eit|^4\right)<\infty.$$
    \item \label{ass:karoui_cond_or_identity}One of the following assumptions holds: (a) There exists $\varepsilon>0$ such that 
$$\Exp\left(\,|\eit|^4(\log|\eit|)^{2+2\varepsilon}\right)<\infty,$$ 
    or (b) $\bLambda$ is diagonal.
\end{enumerate}

For two nonnegative sequences $\{a_n\}$ and $\{b_n\}$, by $a_n\sim b_n$ we mean that there is a positive sequence $\{\tau_n\in (0,1)\}$ such that $\lim_{n\to\infty}\tau_n=1$, and for large enough $n$, $\tau_n a_n\le b_n\le \tau_n^{-1}a_n$. For a family of such sequences $\{a_n^{(i)}\}_i$ and $\{b_n^{(i)}\}_i$, we say that $a_n^{(i)}\sim b_n^{(i)}$ uniformly if there is $\{\tau_n\in (0,1)\}$ independent of $i$, with $\lim_{n\to\infty}\tau_n=1$, such that $\tau_n a_n^{(i)}\le b_n^{(i)}\le \tau_n^{-1}a_n^{(i)}$ holds for all $i$ and for large enough $n$.

\begin{thm}\label{thm:asymptotic equi-spectral}For a $K$-factor model with $K\ge 0$ fixed, if \ref{ass:Gamma_row_ne0}-\ref{ass:karoui_cond_or_identity} hold, then as $\kr$, $\krr>0$, for any $i=1,2,\dots,\p$, we have almost surely
$$\lambda_i(\C)\sim\lambda_i(\bDelta^{-\frac{1}{2}}\S\bDelta^{-\frac{1}{2}}), \quad \lambda_i(\tC)\sim\lambda_i(\bDelta^{-\frac{1}{2}}\tS\bDelta^{-\frac{1}{2}})$$
uniformly.
\end{thm}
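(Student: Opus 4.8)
The plan is to transfer the spectrum from $\C$ to $\bDelta^{-\frac12}\S\bDelta^{-\frac12}$ by a purely \emph{multiplicative} argument, so that the comparison stays valid even when $\snorm{\bDelta^{-\frac12}\bGamma}$ is unbounded. Both $\C$ and $\bDelta^{-\frac12}\S\bDelta^{-\frac12}$ are symmetric positive semidefinite, and by \ref{ass:Gamma_row_ne0} the diagonal matrices $\D=\diag(\S)$ and $\bDelta=\diag(\bSigma)$ are almost surely positive definite, so for each of these matrices the eigenvalues agree with the singular values. Put $\bH:=\bDelta^{\frac12}\D^{-\frac12}=\diag\!\big((\bSigma_{\k\k}/\S_{\k\k})^{1/2}\big)_{1\le\k\le\p}$, a diagonal, hence self-adjoint, matrix, so that $\C=\bH\,(\bDelta^{-\frac12}\S\bDelta^{-\frac12})\,\bH$. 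Applying Theorem~\ref{thm:perturbation_multiplicative} to the product $\bH\cdot\big((\bDelta^{-\frac12}\S\bDelta^{-\frac12})\bH\big)$, then using that singular values are invariant under conjugate transposition (and that $\bH$ and $\bDelta^{-\frac12}\S\bDelta^{-\frac12}$ are self-adjoint) to rewrite $\sv{i}{(\bDelta^{-\frac12}\S\bDelta^{-\frac12})\bH}=\sv{i}{\bH(\bDelta^{-\frac12}\S\bDelta^{-\frac12})}$ and applying Theorem~\ref{thm:perturbation_multiplicative} once more, we obtain for every $i=1,\dots,\p$
\[
\min_{\k}\frac{\bSigma_{\k\k}}{\S_{\k\k}}\cdot\lambda_i\!\big(\bDelta^{-\frac12}\S\bDelta^{-\frac12}\big)\ \le\ \lambda_i(\C)\ \le\ \max_{\k}\frac{\bSigma_{\k\k}}{\S_{\k\k}}\cdot\lambda_i\!\big(\bDelta^{-\frac12}\S\bDelta^{-\frac12}\big),
\]
and, with $\tilde\bH:=\diag\!\big((\bSigma_{\k\k}/\tS_{\k\k})^{1/2}\big)$, the same chain bounds $\lambda_i(\tC)$ by $\min_{\k}(\bSigma_{\k\k}/\tS_{\k\k})$ and $\max_{\k}(\bSigma_{\k\k}/\tS_{\k\k})$ times $\lambda_i(\bDelta^{-\frac12}\tS\bDelta^{-\frac12})$. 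No bound on $\snorm{\bDelta^{-\frac12}\bGamma}$ entered; this is exactly where replacing an additive (Weyl-type) perturbation estimate by Theorem~\ref{thm:perturbation_multiplicative} removes the hypothesis of \cite{karoui09concentration}.

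Set $\tau_n:=\min\!\big(\min_\k\bSigma_{\k\k}/\S_{\k\k},\ \min_\k\S_{\k\k}/\bSigma_{\k\k},\ 1-n^{-1}\big)\in(0,1)$. The display then gives $\tau_n\,\lambda_i(\bDelta^{-\frac12}\S\bDelta^{-\frac12})\le\lambda_i(\C)\le\tau_n^{-1}\lambda_i(\bDelta^{-\frac12}\S\bDelta^{-\frac12})$ simultaneously for all $i$ with one and the same $\tau_n$, and likewise for $\tC$; by the definition of $\sim$, the theorem therefore follows once we prove
\[
\max_{1\le\k\le\p}\Big|\frac{\S_{\k\k}}{\bSigma_{\k\k}}-1\Big|\longrightarrow0\qquad\text{and}\qquad\max_{1\le\k\le\p}\Big|\frac{\tS_{\k\k}}{\bSigma_{\k\k}}-1\Big|\longrightarrow0\qquad\text{a.s.}
\]
Let $\bm{e}_\k:=\bgamma^\k/\norm{\bgamma^\k}$ be the normalized $\k$th row of $\bGamma$ — a \emph{deterministic} unit vector — and write $w_{\k\j}:=\bm{e}_\k\Z_{\cdot\j}$, so that $w_{\k1},\dots,w_{\k\n}$ are i.i.d.\ with mean $0$ and variance $1$ and $\tS_{\k\k}/\bSigma_{\k\k}=\frac1\n\sum_\j w_{\k\j}^2$, while $\S_{\k\k}/\bSigma_{\k\k}=\frac{\n}{\n-1}\big(\frac1\n\sum_\j w_{\k\j}^2-\bar w_\k^2\big)$ with $\bar w_\k=\frac1\n\sum_\j w_{\k\j}$. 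For each fixed $\k$ both tend to $1$ a.s.\ by the scalar strong law; the issue — and the only place where \ref{ass:moment4} and \ref{ass:karoui_cond_or_identity} are used — is uniformity over the \emph{growing} family of $\p\sim c\n$ indices $\k$, which is strictly stronger than a bound on $\snorm{\frac1\n\Z\Z^\top-\I}$ (this norm does not tend to $0$ in the proportional regime).

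Splitting $w_{\k\j}=\bm{e}_\k^{\bF}\bF_{\cdot\j}+\bm{e}_\k^{\bPsi}\bPsi_{\cdot\j}$ along the factor/noise blocks of $\bm{e}_\k$, with $\norm{\bm{e}_\k^{\bF}}\le1$ and $\norm{\bm{e}_\k^{\bPsi}}\le1$, the factor contribution to $\frac1\n\sum_\j w_{\k\j}^2$, the factor--noise cross contribution, and $\bar w_\k^2$ involve only the $K^2$ empirical second moments $\frac1\n\sum_\j f_{a\j}f_{a'\j}$, the means $\bar f_a$, and the $K$ bilinear averages $\frac1\n\sum_\j f_{a\j}(\bm{e}_\k^{\bPsi}\bPsi_{\cdot\j})$, all with coefficients in $\bm{e}_\k$ bounded by $1$; since $K$ is fixed these are handled by the ordinary strong law (with a routine truncation for the bilinear averages, whose conditional fourth moments given $\bF$ are $O(\n^{-2})$ a.s.). The noise contribution is the quadratic form $\bm{e}_\k^{\bPsi}\big(\frac1\n\bPsi\bPsi^\top\big)(\bm{e}_\k^{\bPsi})^\top$. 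Under \ref{ass:karoui_cond_or_identity}(b) the matrix $\bLambda$ is diagonal, so $\bm{e}_\k^{\bPsi}$ is supported on the single coordinate $\k$ and this form equals $(\Lambda_{\k\k}^2/\norm{\bgamma^\k}^2)\,\frac1\n\sum_\j\psi_{\k\j}^2$, for which $\max_\k|\frac1\n\sum_\j\psi_{\k\j}^2-1|\to0$ a.s.\ is the classical statement valid under the bare fourth-moment condition \ref{ass:moment4}. Under \ref{ass:karoui_cond_or_identity}(a), $\bm{e}_\k^{\bPsi}$ may be spread over many coordinates and the form cannot be controlled by $\snorm{\frac1\n\bPsi\bPsi^\top-\I}$; instead one shows $\max_\k\bm{e}_\k^{\bPsi}\big(\frac1\n\bPsi\bPsi^\top-\I\big)(\bm{e}_\k^{\bPsi})^\top\to0$ a.s.\ by truncating the $\eit$ at a slowly growing level, using Borel--Cantelli on the discarded part and, on the truncated part, a Bernstein/Rosenthal tail bound together with a union bound over the $\p$ \emph{fixed} unit vectors $\bm{e}_\k^{\bPsi}$ — and the logarithmic weight $(\log|\eit|)^{2+2\varepsilon}$ in \ref{ass:karoui_cond_or_identity}(a) is precisely what makes each single-vector deviation probability $o(1/\p)$ summably, so that the union bound over $\k$ goes through. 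This uniform control of $\p$-many fixed quadratic forms of the noise sample covariance is the technical heart of the argument; it is the analysis of \cite{karoui09concentration}, and — crucially — it uses only moment assumptions on the data, never the boundedness of $\snorm{\bDelta^{-\frac12}\bGamma}$, so it transfers to the present setting unchanged.
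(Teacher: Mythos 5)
Your overall architecture is exactly the paper's: the same multiplicative sandwich $\C=(\bDelta^{1/2}\D^{-1/2})(\bDelta^{-1/2}\S\bDelta^{-1/2})(\bDelta^{1/2}\D^{-1/2})$, a double application of Theorem~\ref{thm:perturbation_multiplicative} to trap $\lambda_i(\C)$ between $\min_k(\bSigma_{kk}/\S_{kk})$ and $\max_k(\bSigma_{kk}/\S_{kk})$ times $\lambda_i(\bDelta^{-1/2}\S\bDelta^{-1/2})$, and the reduction to the uniform diagonal convergence $\max_k|\S_{kk}/\bSigma_{kk}-1|\to0$ a.s.\ (the paper's Lemma~\ref{lem:D/Delta_identity}), with the same case split along \ref{ass:karoui_cond_or_identity}(a)/(b). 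Case (b) is handled as in the paper (reduction to double arrays and Lemma~\ref{lem:bai-yin2}), and the first half of your argument is sound.

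The one place where you genuinely deviate, and where your sketch as written has a hole, is case (a). You decompose $\norm{\x^k}^2/\n$ into the factor block, the noise quadratic form $\bm{e}_k^{\bPsi}(\n^{-1}\bPsi\bPsi^\top)(\bm{e}_k^{\bPsi})^\top$, and the factor--noise cross terms $\n^{-1}\sum_j f_{aj}(\bm{e}_k^{\bPsi}\bPsi_{\cdot j})$, and you propose to control the cross terms by their conditional fourth moments, which are $O(\n^{-2})$. But there are $K\p$ such terms with $\p\sim c\n$, so Markov at fourth order plus a union bound gives a per-level failure probability of order $\n^{-1}$, which is not summable; Borel--Cantelli does not close. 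The paper avoids this entirely: after truncating $\eit$ at $\sqrt{\n/(\log\n)^{1+\varepsilon}}$ and re-centering, it concentrates the \emph{whole} norm $d_i(\bF,\hbPsi)=\norm{\bell^i\bF+\bg^i\hbPsi}$ at once, conditionally on $\bF$, as a convex $1$-Lipschitz function of the bounded truncated noise (Ledoux/Talagrand), obtaining the exponential tail $4\exp(-\delta^2(\log\n)^{1+\varepsilon}/16)$ that survives the union bound over $i\le\p$; the cross term never appears as a separate object. Your plan is repairable (e.g.\ by Bernstein on the truncated bilinear sums, or simply by adopting the paper's one-shot concentration of $d_i$), but the fourth-moment route you state would fail. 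A second, minor point: under \ref{ass:karoui_cond_or_identity}(a) alone $\bLambda$ need not be diagonal, so one must also verify, as the paper does, that truncation and re-centering perturb $\tD$ negligibly via $\snorm{\bPsi-\hbPsi}=|e_\n|\sqrt{\p\n}\to0$; this step is absent from your write-up.
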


It should be noted that no bounding condition for $\bDelta^{-1/2}\bLambda$ is required. Therefore, when $\bL=\o$, the theorem is applicable to a general linear model $\X=\M+\bLambda\bPsi$, where $\bLambda$ can be unbounded. Furthermore, assuming $\bL \neq \o$, we accommodate different distributions for factor and noise components, for example, the factors are allowed to have a heavy-tailed distribution, along with a light-tailed noise.

The general result being stated, it is still complex to determine the asymptotic locations of a sample covariance matrix in general. We content ourselves with some particular cases that extend an ENP.

\paragraph{Model example 1: Constant length factor loading model (CLFM).}

\begin{definition}[Constant length factor loading model]\label{def:CLFM} By a \emph{constant length factor loading model}~$($CLFM for short$)$, we mean a $K$-factor model with $\bLambda=\sigma\I$ for some $\sigma>0$, whose factor loading vectors have the same length, i.e., there is a constant $L\ge0$ independent of $\p$ and $\n$ such that the $i$th row vector $\bell^i$ $(\rangei)$ of $\bL$ has length $\norm{\bell^i}=L$.
\end{definition}
Note that a CLFM satisfies automatically \ref{ass:Gamma_row_ne0}, 
and \ref{ass:karoui_cond_or_identity}(b). Moreover, the largest eigenvalue of $\bL^\top\bL$ has asymptotic tight order $\p$:
\begin{align}
\frac{L^2\p}{K}\le \lambda_1(\bL^\top\bL)\le L^2\p.\label{order:largest_eig_LLT}
\end{align}
It is because, by letting $\bell_\k$ be the $k$th column of $\bL$ for $\rangek$, we have
$$\max_{\rangek}{\norm{\bell_\k}^2}\le \lambda_1(\bL^\top \bL)\le
 \trace(\bL^\top \bL)=L^2 N\le K\max_{\rangek}{\norm{\bell_\k}^2}$$ where
the first inequality is due to Courant-Fisher min-max
theorem~\cite[Theorem~4.2.6]{horn2013}, and the equality between the
trace and $L^2\p$ is due to that the length of each row of $\bL$ is $L$.   

Furthermore, we will consider the following assumption:
\begin{enumerate}[series=assumptions,resume,leftmargin=*, label={\bf A\arabic{enumi}}]
    \item \label{ass:L_rank_r_sr_infty} The rank of $\bL$ is $r\ge 0$, and the $r$ nonzero eigenvalues of $\bL^\top \bL$ (if any) tend to infinity: for $k=1,\dots,r$,
    \begin{align*} \lim_{\p\to\infty}\lambda_k(\bL^\top\bL)=\infty. 
    \end{align*}
\end{enumerate}

\begin{thm}\label{thm:main_CLFM_largest}
If a CLFM satisfies \ref{ass:moment4} and \ref{ass:L_rank_r_sr_infty},
then for $k=1,\ldots,r$,
 \begin{align}
  \lim_\kr\frac{\lam{k}{\C}}{\lam{k}{\bL\bL^\top}}=\lim_\kr\frac{\lambda_{k}(\tC)}{\lam{k}{\bL\bL^\top}}&=\frac{1}{L^2+\sigma^2} & (a.s.),\label{thm:main_CLFM_unbounded}
 \end{align}
 and
\begin{align}
\lim_\kr\lamLC{\C}=\lim_\kr\lambda_{r+1}(\tC)&=\frac{\sigma^2(1+\sqrt{\c})^2}{L^2+\sigma^2} & (a.s.).\label{thm:main_CLFM_bounded}
\end{align}
\end{thm}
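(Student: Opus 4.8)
The plan is to reduce the CLFM statement to a sample covariance problem via Theorem~\ref{thm:asymptotic equi-spectral}, and then apply a spiked-model analysis to the resulting covariance matrix. First I would invoke Theorem~\ref{thm:asymptotic equi-spectral}: since a CLFM satisfies \ref{ass:Gamma_row_ne0} and \ref{ass:karoui_cond_or_identity}(b) automatically, and \ref{ass:moment4} is assumed, we have $\lambda_i(\C)\sim\lambda_i(\bDelta^{-1/2}\S\bDelta^{-1/2})$ and $\lambda_i(\tC)\sim\lambda_i(\bDelta^{-1/2}\tS\bDelta^{-1/2})$ uniformly (a.s.). For a CLFM the population variances are all equal: $\var(x_{i1})=\norm{\bgamma^i}^2=\norm{\bell^i}^2+\sigma^2=L^2+\sigma^2$, so $\bDelta=(L^2+\sigma^2)\I_\p$ is a scalar matrix. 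Hence $\bDelta^{-1/2}\S\bDelta^{-1/2}=\S/(L^2+\sigma^2)$ and similarly for $\tS$, and the problem reduces to locating the eigenvalues of $\S$ and $\tS$.

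Next I would analyze $\tS=\frac1\n(\bL\bF+\sigma\bPsi)(\bL\bF+\sigma\bPsi)^\top$, which is exactly a (generalized) spiked sample covariance matrix: the population covariance is $\bSigma=\bL\bL^\top+\sigma^2\I_\p$, whose top $r$ eigenvalues are $\lambda_k(\bL\bL^\top)+\sigma^2$ (which, by \ref{ass:L_rank_r_sr_infty} together with \eqref{order:largest_eig_LLT}, all diverge, in fact at order $\p$), and whose remaining eigenvalues all equal $\sigma^2$. For the divergent spikes, I would use a standard argument (cf. the lemmas in Section~\ref{sec:useful lemma}): write $\S$ or $\tS$ in the $(K+\p)$-coordinate form $\bGamma\Z\Z^\top\bGamma^\top/\n$ and compare with $\bGamma(\Z\Z^\top/\n)\bGamma^\top$; since $\Z$ has i.i.d.\ unit-variance entries with finite fourth moment and $K$ is fixed, the relevant sub-blocks of $\Z\Z^\top/\n$ converge to identities, so $\lambda_k(\tS)/\lambda_k(\bL\bL^\top)\to1$ for $k\le r$. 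Combined with the reduction above this gives $\lambda_k(\C)/\lambda_k(\bL\bL^\top)\to 1/(L^2+\sigma^2)$. The non-divergent eigenvalue $\lambda_{r+1}$ is the largest eigenvalue of the ``bulk'' part: after projecting out the $r$-dimensional spike directions, the remaining operator behaves like $\frac{\sigma^2}{\n}\bPsi\bPsi^\top$ (perturbed by the $\bL\bF$ part, which lives in an $r$-dimensional column space and is asymptotically orthogonal to the bulk), whose largest eigenvalue converges a.s.\ to $\sigma^2(1+\sqrt c)^2$ by the Bai--Yin theorem. Dividing by $L^2+\sigma^2$ yields \eqref{thm:main_CLFM_bounded}.

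To make the $\lambda_{r+1}$ claim rigorous I would use interlacing: $\lambda_{r+1}(\tS)$ is sandwiched between the largest eigenvalue of $\sigma^2\bPsi\bPsi^\top/\n$ after a rank-$r$ downward perturbation and an analogous upper bound, both of which converge to $\sigma^2(1+\sqrt c)^2$; more precisely, writing $\tS=\sigma^2\bPsi\bPsi^\top/\n + (\text{rank } \le 2r \text{ term})$ and applying Weyl's inequality $\lambda_{r+1}(\tS)\le\lambda_1(\sigma^2\bPsi\bPsi^\top/\n)+\lambda_{r+1}(\text{rank}\le 2r)=\lambda_1(\sigma^2\bPsi\bPsi^\top/\n)$ for $r+1>2r$... one must be a little careful here since $2r\ge r+1$ when $r\ge1$, so instead I would bound via $\lambda_{r+1}(\tS)\le\lambda_1(\sigma^2\bPsi\bPsi^\top/\n+\text{cross terms})$ where the cross terms $\frac{\sigma}{\n}(\bL\bF\bPsi^\top+\bPsi\bF^\top\bL^\top)$ are shown to have vanishing operator norm relative to... actually the cleanest route is: on the orthogonal complement of $\mathrm{col}(\bL)$ (dimension $\p-r$), $\tS$ restricted there equals $\sigma^2 P\bPsi\bPsi^\top P/\n$ up to a term that is $o_{a.s.}(1)$ in operator norm, so by Cauchy interlacing $\lambda_{r+1}(\tS)$ is within $o(1)$ of $\lambda_1(\sigma^2\bPsi\bPsi^\top/\n)\to\sigma^2(1+\sqrt c)^2$, and a matching lower bound comes from restricting the quadratic form to a bulk eigenvector of $\bPsi\bPsi^\top$ chosen orthogonal to $\mathrm{col}(\bL)$.

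The passage from $\tS$ (theoretically centered) to $\S$ (data centered) is routine: $\S$ and $\tS$ differ by a rank-one term plus the $\frac{\n}{\n-1}$ rescaling, which affects no eigenvalue location in the limit; alternatively one applies Theorem~\ref{thm:asymptotic equi-spectral} directly to both $\C$ and $\tC$. The main obstacle I anticipate is the careful control of the bulk eigenvalue $\lambda_{r+1}$: one needs the cross terms between the spike subspace and the noise bulk to be asymptotically negligible at the right scale, and this requires an operator-norm estimate on $\frac1\n\bPsi\bPsi^\top$ and on $\frac1{\sqrt\n}\bPsi$ of the Bai--Yin type together with a delocalization-type argument ensuring the fixed $r$-dimensional column space of $\bL$ does not interact with the extreme bulk direction — this is exactly where the finite-fourth-moment hypothesis \ref{ass:moment4} is used, and it is the step where the generalized-spike literature (as packaged in the supplementary lemmas) does the heavy lifting.
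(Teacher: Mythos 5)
Your overall architecture is the paper's: reduce to the covariance matrices via Theorem~\ref{thm:asymptotic equi-spectral} (a CLFM satisfies \ref{ass:Gamma_row_ne0} and \ref{ass:karoui_cond_or_identity}(b)) and the scalar matrix $\bDelta=(L^2+\sigma^2)\I$, then locate the eigenvalues of $\tS$ and $\S$; and your upper bound for $\lambda_{r+1}(\tS)$ (restrict to $\mathrm{col}(\bL)^\perp$ and interlace) is equivalent to the paper's use of $\sv{r+1}{\bL\bF+\sigma\bPsi}\le \sv{r+1}{\bL\bF}+\sigma\sv{1}{\bPsi}=\sigma\sv{1}{\bPsi}$. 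However, your treatment of the spikes has a genuine gap. The parenthetical claim that the $r$ nonzero eigenvalues of $\bL^\top\bL$ are ``in fact at order $\p$'' is unwarranted: \eqref{order:largest_eig_LLT} controls only $\lambda_1(\bL^\top\bL)$, while \ref{ass:L_rank_r_sr_infty} merely requires $\lambda_k(\bL^\top\bL)\to\infty$, possibly very slowly for $2\le k\le r$. This matters because your comparison of $\bGamma(\Z\Z^\top/\n)\bGamma^\top$ with $\bGamma\bGamma^\top$ rests on the assertion that ``the relevant sub-blocks of $\Z\Z^\top/\n$ converge to identities,'' which is false in operator norm ($\bPsi\bPsi^\top/\n$ does not tend to $\I_\p$, and $\snorm{\bF\bPsi^\top/\n}$ stays of order $1$); any eigenvalue-level (Weyl) repair leaves cross terms $\frac{\sigma}{\n}(\bL\bF\bPsi^\top+\bPsi\bF^\top\bL^\top)$ of operator norm of order $\sv{1}{\bL}=O(\sqrt{\p})$, which need not be $o(\lambda_k(\bL\bL^\top))$ for a slowly diverging $k$th spike. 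The paper avoids this by perturbing at the level of \emph{singular values}: Corollary~\ref{corol:small_size_pert_singular_v} gives $\abs{\sv{k}{(\bL\bF+\sigma\bPsi)/\sqrt\n}-\sv{k}{\bL\bF/\sqrt\n}}\le\sigma\sv{1}{\bPsi/\sqrt\n}=O(1)$, an additive error killed simply by dividing by $\sv{k}{\bL}\to\infty$, and then Theorem~\ref{thm:perturbation_multiplicative} with $\bF\bF^\top/\n\to\I_K$ yields $\sv{k}{\bL\bF}/(\sqrt\n\,\sv{k}{\bL})\to1$.

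Two smaller gaps: your lower bound for $\lambda_{r+1}$ from ``a bulk eigenvector of $\bPsi\bPsi^\top$ chosen orthogonal to $\mathrm{col}(\bL)$'' does not suffice, since a single test vector lower-bounds $\lambda_1$, not $\lambda_{r+1}$; you need an $(r+1)$-dimensional test subspace together with the convergence of fixed-index eigenvalues of $\bPsi\bPsi^\top/\n$ to the right edge, or, as the paper does (Lemma~\ref{lem:LSD}), the Mar\v{c}enko--Pastur limit of the empirical spectral measure of $\tS$, which forces a positive proportion of eigenvalues above $\sigma^2(1+\sqrt c)^2-\epsilon$. And the passage from $\tC$ to $\C$ is not settled by noting the perturbation has low rank: a rank-one perturbation of operator norm $O(1)$ could in principle move the bounded eigenvalue $\lambda_{r+1}$; the paper instead reruns the argument with $\bF-\mbF$ and $\bPsi-\mbPsi$ and invokes the centered version of Proposition~\ref{prop:lambda1C}.
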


If a given CLFM is an ENP, then Definition~\ref{def:ENP} implies $K=1$, which gives $\lam{1}{\bL\bL^\top}=L^2\p$ by \eqref{order:largest_eig_LLT}. Thus, by Theorem~\ref{thm:main_CLFM_largest}~\eqref{thm:main_CLFM_unbounded}, both $\lim_\kr\lam{1}{\C}/\p$ and $\lim_\kr\lambda_1(\tC)/\p$ are almost surely $\rho$, which is proved in \cite{Akama23:_correl}. 
Theorem~\ref{thm:main_CLFM_largest}~\eqref{thm:main_CLFM_bounded} implies that $\lim_\kr\lamLC{\C}=\lim_\kr\lambda_{r+1}(\tC)=(1-\rho)(1+\sqrt{c})^2$ almost surely.

As a direct application of Theorem~\ref{thm:main_CLFM_largest}, we establish that the limit of the broken-stick rule $\BS(\C)$ and $\BS(\tC)$ equal to $r$. 

\begin{thm}\label{thm:CLFM_BS}
If a CLFM satisfies \ref{ass:moment4}, and if the smallest nonzero eigenvalue (if there exists) of $\bL^\top\bL$ is eventually larger than $\log\p$, i.e.,
\begin{align}
\varliminf_{\p\to\infty}\frac{\lambda_r(\bL^\top\bL)}{\log \p} > L^2+\sigma^2 , \label{eq:lambda_r_underbound}
\end{align}
then
\begin{align*}
     \lim_\kr\BS(\C)&=\lim_\kr\BS(\tC)=r &(a.s.).
\end{align*}
\end{thm}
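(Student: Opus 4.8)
The plan is to deduce the theorem from Theorem~\ref{thm:main_CLFM_largest} by a direct analysis of the definition of the broken-stick rule applied to the eigenvalues of $\C$ (the argument for $\tC$ being identical). Recall that $\BS(\C)$ outputs $i-1$, where $i$ is the smallest index in $[1,\p]$ for which $\lambda_i(\C) \le \sum_{j=i}^{\p} 1/j$. So it suffices to show, almost surely and for $\p,\n$ large, two things: (i) $\lambda_k(\C) > \sum_{j=k}^{\p} 1/j$ for every $k=1,\dots,r$, so the cutoff index $i$ is not reached at any $k\le r$; and (ii) $\lambda_{r+1}(\C) \le \sum_{j=r+1}^{\p} 1/j$, so the cutoff index $i$ equals exactly $r+1$, giving $\BS(\C)=r$. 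The harmonic tail $\sum_{j=m}^{\p} 1/j$ behaves like $\log(\p/m)$, hence is $\log\p + O(1)$ for any fixed $m$; this is the quantity both inequalities must be compared against.

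For part (ii): by \eqref{thm:main_CLFM_bounded}, $\lambda_{r+1}(\C) \to \sigma^2(1+\sqrt{c})^2/(L^2+\sigma^2)$ almost surely, a finite constant, whereas $\sum_{j=r+1}^{\p} 1/j \to \infty$ since $r$ is fixed. Hence for all large $\p$ the inequality $\lambda_{r+1}(\C) \le \sum_{j=r+1}^{\p} 1/j$ holds almost surely. (If $r=0$ there is nothing to prove in part (i), and part (ii) already gives $\BS(\C)=0$.) For part (i): fix $k\in\{1,\dots,r\}$. By \eqref{thm:main_CLFM_unbounded}, $\lambda_k(\C) \sim \lambda_k(\bL\bL^\top)/(L^2+\sigma^2)$ almost surely, and by \eqref{order:largest_eig_LLT} and the ordering of eigenvalues, $\lambda_k(\bL\bL^\top)\ge\lambda_r(\bL\bL^\top)$. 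So it is enough to check that $\lambda_r(\bL\bL^\top)/(L^2+\sigma^2)$ eventually exceeds $\sum_{j=k}^{\p} 1/j \le \sum_{j=1}^{\p} 1/j = \log\p + O(1)$. This is exactly where hypothesis \eqref{eq:lambda_r_underbound} enters: it says $\varliminf_{\p}\lambda_r(\bL^\top\bL)/\log\p > L^2+\sigma^2$, equivalently $\lambda_r(\bL\bL^\top)/(L^2+\sigma^2) > (1+\delta)\log\p$ eventually for some $\delta>0$, which dominates $\log\p + O(1)$. Combining with the almost-sure convergence $\lambda_k(\C)(L^2+\sigma^2)/\lambda_k(\bL\bL^\top)\to 1$, we get $\lambda_k(\C) > \log\p + O(1) \ge \sum_{j=k}^\p 1/j$ almost surely for large $\p$, as needed.

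The one point needing mild care is that Theorem~\ref{thm:main_CLFM_largest} gives convergence of the \emph{ratio} $\lambda_k(\C)/\lambda_k(\bL\bL^\top)$, and $\lambda_k(\bL\bL^\top)$ itself is a deterministic sequence tending to infinity under \ref{ass:L_rank_r_sr_infty}; one must combine the almost-sure ratio convergence with the deterministic lower bound from \eqref{eq:lambda_r_underbound} on a single probability-one event, uniformly over the finitely many indices $k=1,\dots,r$. Since $r$ is fixed this is a finite intersection of probability-one events, so no uniformity issue arises. I would also note that \eqref{eq:lambda_r_underbound} forces $\lambda_r(\bL^\top\bL)\to\infty$, so hypothesis \ref{ass:L_rank_r_sr_infty} is implied here and Theorem~\ref{thm:main_CLFM_largest} applies with \ref{ass:moment4} alone as assumed. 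The main (and only real) obstacle is the bookkeeping of the two competing logarithmic rates — the harmonic tail $\log\p$ versus the eigenvalue growth $\lambda_r(\bL\bL^\top)$ — and making sure the strict inequality in \eqref{eq:lambda_r_underbound} is exactly the margin needed to absorb both the $O(1)$ harmonic correction and the $\tau_\p\to 1$ slack from the $\sim$ relation; there is no delicate probabilistic estimate beyond invoking Theorem~\ref{thm:main_CLFM_largest}. $\qed$
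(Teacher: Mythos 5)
Your proof is correct and follows essentially the same route as the paper's: both deduce the result from Theorem~\ref{thm:main_CLFM_largest} by comparing $\lambda_{r+1}(\C)$ (bounded) and $\lambda_k(\C)$ for $k\le r$ (growing faster than $(1+\delta)\log\p$ under \eqref{eq:lambda_r_underbound}) against the harmonic tail $\log\p+O(1)$. Your added remark that \eqref{eq:lambda_r_underbound} implies \ref{ass:L_rank_r_sr_infty}, so that Theorem~\ref{thm:main_CLFM_largest} is indeed applicable under the stated hypotheses, is a correct and worthwhile observation that the paper leaves implicit.
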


Here we give a sufficient condition to ensure \ref{ass:L_rank_r_sr_infty} or \eqref{eq:lambda_r_underbound} above. If the rank of $\bL$ is $r\le K$, then by rearranging the columns, $\bL$ can be written as
$$\bL=\begin{bmatrix}
    \bL_1 & \bL_1\mathbf{A}
\end{bmatrix}$$
where $\bL_1$ is an $\p\times r$ matrix and $\mathbf{A}$ an $r\times (K-r)$ matrix. Then by eigenvalue interlacing theorem~\citep[Theorem~4.3.28]{horn2013},
$$\lambda_r(\bL\bL^\top)\ge \lambda_r(\bL_1\bL_1^\top).$$
Let $\B=\diag(\norm{\bell_1},\ldots,\norm{\bell_r})$ and
$$\cos(\bell_i,\bell_j)=\frac{\langle
\bell_i,\bell_j\rangle}{\norm{\bell_i}\norm{\bell_j}}$$ be the cosine of the angle generated by the two vectors $\bell_i,\bell_j$. 
By virtue of $\bL_1^\top\bL_1=\B\,\left[\cos(\bell_i,\bell_j)\right]_{1\le i,j\le r}\,\B,$
 Theorem~\ref{thm:perturbation_multiplicative} implies
$$\lambda_r(\bL_1\bL_1^\top)\ge 
\lambda_r\left(\,\left[\cos(\bell_i,\bell_j)\right]_{1\le i,j\le r}\right)
\min_{1\le j\le r}\norm{\bell_j}^2.
$$
By \cite[Theorem~1]{varah1975lower},
\begin{align*}
\lambda_r\left(\,\left[\cos(\bell_i,\bell_j)\right]_{1\le i,j\le r}\right)\ge \min_{1\le \k\le r}\left(1-\sum_{j\ne \k}|\cos(\bell_j,\bell_\k)|\right).
\end{align*}

Therefore, for any positive sequence $\{b_\p>0\}$, 
      \begin{align}
       \varliminf_{\p\to\infty} \frac{\min_{1\le k\le r}\left(1-\sum_{j\ne \k}|\cos(\bell_j,\bell_\k)|\right)\min_{1\le k\le r}\norm{\bell_\k}^2}{b_\p} > 1 \label{eq:lowerbound_lambda_r_LL}
      \end{align}
is a sufficient condition of 
$$\varliminf_{\p\to\infty}\frac{\lambda_r(\bL\bL^\top)}{b_\p} > 1.$$

This condition is noteworthy because the magnitude of $\norm{\bell_j}$ can be seen as the overall influence of the $j$th factor $\{f_{jt}\}_t$ on the dataset; whereas the normalized vector $\bell_j/\norm{\bell_j} \in \Rset^\p$ indicates the effects of the $j$th factor $\{f_{jt}\}_t$ across the $\p$ covariates. By Condition~\eqref{eq:lowerbound_lambda_r_LL}, if $r$ factors exert equally substantial influences on the dataset and their impact distributions are sufficiently independent, then the value of $\lambda_r(\bL\bL^\top)$ tends to be large.

\paragraph{Model example 2: Asymptotic constant factor loading model (ACFM).} We consider a $K$-factor model satisfying the following 
\begin{enumerate}[series=assumptions,resume,leftmargin=*, label={\bf A\arabic{enumi}}]
    \item \label{ass:limiting_CLFM} There is a sequence of $K$-dimensional row vectors $(\bell^k)_{k\ge 1}$ and a sequence of strict positive numbers $(\sigma_k)$ such that 
    $$\bL=\begin{bmatrix}
        (\bell^1)^\top & \dots (\bell^\p)^\top
    \end{bmatrix}^\top, \quad \bLambda=\diag(\sigma_1,\dots,\sigma_\p),$$
    and there are $\bell\in \Rset^{K}$ and $\sigma>0$ such that 
    $$\lim_{k\to\infty}\norm{\bell^k-\bell}=0;\quad \lim_{k\to\infty}\sigma_k=\sigma.$$
\end{enumerate}

This model was considered in \cite{Akama23:_correl} where the limiting spectral distribution of its sample correlation matrices was derived. The limit of the broken-stick rule $\BS(\C)$ and $\BS(\tC)$ for this model is determined here. 
\begin{thm}\label{thm:asympt_CLF} If a $K$-factor model satisfies \ref{ass:Gamma_row_ne0}, \ref{ass:moment4} and \ref{ass:limiting_CLFM}, and if in additional,
$$\sum_{k=1}^\p \norm{\bell^k-\bell}^2=o(\log \p),$$
then
    $$\lim_{\kr}\BS(\C)=\lim_{\kr}\BS(\tC)=\Indctr{\bell\ne \o}, \quad (a.s.),$$
where $\Indctr{\cdot}$ is the indicator function.
\end{thm}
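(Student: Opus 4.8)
The plan is to reduce the behaviour of the broken-stick rule on $\C$ and $\tC$ to the locations of the two largest eigenvalues of the associated sample covariance matrices, exploiting that under \ref{ass:limiting_CLFM} the loading matrix $\bL$ equals the rank-$\le1$ matrix $\ones{\p}\bell^\top$ plus a term $E$ negligible on the scale $\sqrt{\log\p}$, so that the ``essential rank'' of $\bL$ is $\Indctr{\bell\ne\o}\in\{0,1\}$. An ACFM satisfies \ref{ass:Gamma_row_ne0} and \ref{ass:moment4} by hypothesis, and \ref{ass:karoui_cond_or_identity}(b) because $\bLambda$ is diagonal, so Theorem~\ref{thm:asymptotic equi-spectral} gives $\lambda_i(\C)\sim\lambda_i(\bDelta^{-1/2}\S\bDelta^{-1/2})$ and $\lambda_i(\tC)\sim\lambda_i(\bDelta^{-1/2}\tS\bDelta^{-1/2})$ uniformly in $i$, almost surely. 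Under \ref{ass:limiting_CLFM} the diagonal entries $(\bDelta)_{kk}=\norm{\bell^k}^2+\sigma_k^2$ are positive and converge to $\norm{\bell}^2+\sigma^2>0$, so $\snorm{\bDelta^{1/2}}$ and $\snorm{\bDelta^{-1/2}}$ are bounded and $\lambda_i(\bDelta^{-1/2}\S\bDelta^{-1/2})$ differs from $\lambda_i(\S)$ by bounded factors (similarly for $\tS$). Since the broken-stick thresholds $\sum_{j=i}^{\p}1/j$ grow like $\log\p\to\infty$, the relation $a_\n\sim b_\n$ preserves both ``$a_\n/\log\p\to\infty$'' and ``$a_\n=o(\log\p)$, hence eventually $a_\n<\sum_{j=i}^{\p}1/j$''. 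It therefore suffices to show, almost surely: if $\bell\ne\o$ then $\lambda_1(\S),\lambda_1(\tS)$ are of order at least $\p$ while $\lambda_2(\S),\lambda_2(\tS)=o(\log\p)$; and if $\bell=\o$ then $\lambda_1(\S),\lambda_1(\tS)=o(\log\p)$.

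Write $\bL=\ones{\p}\bell^\top+E$ with $E$ having $k$th row $(\bell^k-\bell)^\top$, so $\|E\|_F^2=\sum_{k=1}^{\p}\norm{\bell^k-\bell}^2=o(\log\p)$. Since $\X-\bM=\bGamma\Z$ and $\X-\mX=\bGamma\Z\P$ with $\P=\I_\n-\n^{-1}\ones{\n}\ones{\n}^\top$, we have $\tS=\n^{-1}\bGamma\Z\Z^\top\bGamma^\top$ and $\S=(\n-1)^{-1}\bGamma\Z\P\Z^\top\bGamma^\top$; as $\Z\P\Z^\top\preceq\Z\Z^\top$ in the Loewner order, Weyl monotonicity gives $\lambda_i(\S)\le\frac{\n}{\n-1}\lambda_i(\tS)$, so the upper bounds need only be proved for $\tS$. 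Decompose
$$\bGamma\Z=\bL\bF+\bLambda\bPsi=\ones{\p}(\bF^\top\bell)^\top+E\bF+\bLambda\bPsi,$$
the first summand having rank $\le1$. On a probability-one event, $\snorm{\bF}^2=\lambda_1(\bF\bF^\top)\le\trace(\bF\bF^\top)=O(\n)$ (SLLN, $K$ fixed), $\snorm{\bPsi}^2=\lambda_1(\bPsi\bPsi^\top)=O(\n)$ (Bai--Yin, using \ref{ass:moment4}), $\snorm{\bLambda}=\max_k\sigma_k=O(1)$, and $\n^{-1}\norm{\bF^\top\bell}^2=\bell^\top(\bF\bF^\top/\n)\bell\to\norm{\bell}^2$; hence $\snorm{E\bF}\le\|E\|_F\snorm{\bF}=o(\sqrt{\n\log\p})$ and $\snorm{\bLambda\bPsi}=O(\sqrt\n)$. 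By the singular-value subadditivity $\sv{i+j-1}{\A+\B}\le\sv{i}{\A}+\sv{j}{\B}$, with $\A=\ones{\p}(\bF^\top\bell)^\top$ (for which $\sv{2}{\A}=0$) and $\B=E\bF+\bLambda\bPsi$: if $\bell\ne\o$ then $\sv{2}{\bGamma\Z}\le\sv{1}{\B}\le\snorm{E\bF}+\snorm{\bLambda\bPsi}=o(\sqrt{\n\log\p})$, so $\lambda_2(\tS)=\n^{-1}\sv{2}{\bGamma\Z}^2=o(\log\p)$; if $\bell=\o$ the rank-one summand is absent and the same estimate yields $\lambda_1(\tS)=o(\log\p)$, settling the case $\bell=\o$.

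It remains to bound $\lambda_1$ from below when $\bell\ne\o$. Using $\snorm{\ones{\p}(\P\bF^\top\bell)^\top}=\sqrt{\p}\,\norm{\P\bF^\top\bell}$, $\snorm{\P}\le1$ and the reverse triangle inequality, and then $\norm{\P\bF^\top\bell}^2=\norm{\bF^\top\bell}^2-\n^{-1}(\ones{\n}^\top\bF^\top\bell)^2=\n\norm{\bell}^2(1+o(1))$ a.s. (the centering correction being $o(\n)$ by the SLLN) together with $\log\p=o(\p)$ so that $\sqrt{\p\n}$ dominates $\sqrt{\n\log\p}$ and $\sqrt\n$, we obtain
$$\sv{1}{\bGamma\Z\P}\ \ge\ \sqrt{\p}\,\norm{\P\bF^\top\bell}-\snorm{E\bF}-\snorm{\bLambda\bPsi}\ \ge\ \sqrt{\p\n}\,\norm{\bell}(1-o(1)),$$
hence $\lambda_1(\S)=(\n-1)^{-1}\sv{1}{\bGamma\Z\P}^2\ge\p\norm{\bell}^2(1-o(1))\to\infty$; deleting $\P$ gives the same lower bound for $\lambda_1(\tS)$. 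Feeding this back through the reduction: when $\bell\ne\o$, for all large $\p$ we have $\lambda_1(\C)>\sum_{j=1}^{\p}1/j$ while $\lambda_2(\C)<\sum_{j=2}^{\p}1/j$, so the least index $i$ with $\lambda_i(\C)\le\sum_{j=i}^{\p}1/j$ equals $2$ and $\BS(\C)=1$; when $\bell=\o$, $\lambda_1(\C)<\sum_{j=1}^{\p}1/j$ eventually, so $\BS(\C)=0$. Either way $\BS(\C)=\Indctr{\bell\ne\o}$ a.s., and the same argument gives $\BS(\tC)=\Indctr{\bell\ne\o}$ a.s.

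The main obstacle is that the essential-rank behaviour must survive the perturbation $E$ \emph{exactly} at the logarithmic scale of the broken-stick threshold: everything hinges on $\snorm{E\bF}=o(\sqrt{\n\log\p})$ (equivalently $\lambda_2(\tS)=o(\log\p)$), which is precisely where the hypothesis $\sum_k\norm{\bell^k-\bell}^2=o(\log\p)$ is consumed, together with the almost-sure bound $\snorm{\bF}=O(\sqrt\n)$. A secondary point is carrying the theoretically- and data-centered versions in parallel: for the upper bounds this is handled by $\Z\P\Z^\top\preceq\Z\Z^\top$, and for the lower bound one needs only that the centering correction to $\norm{\bF^\top\bell}^2$ is $o(\n)$, which is a crude consequence of the strong law of large numbers.
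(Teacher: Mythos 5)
Your proof is correct and follows essentially the same route as the paper's: both pass from $\C,\tC$ to the covariance matrices via Theorem~\ref{thm:asymptotic equi-spectral}, split $\bL$ into the rank-one matrix $\ones{\p}\bell^\top$ plus a remainder of Frobenius norm $o(\sqrt{\log\p})$, and control the eigenvalues with the additive/multiplicative singular-value inequalities before comparing against the broken-stick thresholds $\log\p+O(1)$. If anything, your write-up is more explicit than the paper's (which bounds $|s_i(\bL)-s_i(\bL_0)|$ first and then multiplies by $\bF$, and dismisses the data-centered and $\bell=\o$ cases with ``the proof is similar''), but the decomposition and the key lemmas are the same.
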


An equi-correlated normal population (ENP) is a CLFM and an ACLM, but is not
a model of \cite{FGZ22}; the population covariance matrix of an ENP is $\Sigma = [\rho] + \diag(1-\rho)$  for some constant $\rho$, so an ENP does not satisfy the condition C3 of Section 2 ``High-Dimensional Factor Model'' of \citep{FGZ22}. Some model of \cite{FGZ22} is neither a CLFM nor an ACLM, because of the definitions of CLFM and ACLM.

The proofs of Theorems~\ref{thm:perturbation_multiplicative}, \ref{thm:asymptotic equi-spectral}, \ref{thm:main_CLFM_largest}, \ref{thm:CLFM_BS} and \ref{thm:asympt_CLF} are provided in  Section~\ref{sec:proof_theorems}.

\section{Broken-stick rule for real datasets}\label{sec:empirical study}
We will check whether the following real datasets are generated by a CLFM or an ACFM,  based
on Theorems~\ref{thm:main_CLFM_largest} and \ref{thm:asympt_CLF}:
\begin{enumerate}

\item the datasets~\cite{FGZ22} obtained by cleaning outliers from
       the datasets of the daily excess returns~\citep{https://doi.org/10.1111/j.1540-6261.1968.tb00815.x} of Fama-French 100
       portfolios~(\cite{FAMA19933,FAMA20151}. See Prof.\@ French's data library \url{http://mba.tuck.dartmouth.edu/pages/faculty/ken.french/}).

\item 	binary multiple sequence
	alignment~\citep{querde} by
	the courtesy of Prof. Quadeer.
 
\end{enumerate}
For the correlation matrices $\C$ of these datasets, we will compute the
eight quantities:
\begin{enumerate}
 \item the sample size $\n$,
 \item $\p/\n$,

 \item        $\lam{1}{\C}/\p$
       (cf.\@ Theorem~\ref{thm:main_CLFM_largest}~\eqref{thm:main_CLFM_unbounded}),
 \item the dimension $\p$,
\item the broken-stick rule $\BS(\C)$,

\item \emph{Adjusted correlation
thresholding}~\citep{FGZ22} $
\ACT(\C)$. 

 \item Bai-Ng's rule based on information criterion~\citep{bai02:_deter_the_number_offac_in}.
\end{enumerate}

In a stock return dataset, $\p$~($\n$, resp.) intends the
number of companies~(trading days$-1$, resp.). For
$i,\j$~$(\rangei,\rangej)$, the $i$th row of a data matrix $\X$ is for
the $i$th company and $\j$th column is for the $\j$th trading day.
The factors are those of Fama-French, for instance.

\subsection{Stock return time-series}\label{subsec:stock datasets}
\cite{de} proposed their \emph{Dynamic Equicorrelation} to
forecast time-series of economics.
In asset pricing and portfolio management, Fama and French designed
statistical models, namely, 3-factor model~\citep{FAMA19933} and
5-factor model~\citep{FAMA20151}, to describe stock returns.

\cite{FGZ22} computed their estimator ACT and confirmed the three
Fama-French factors~\citep{FAMA19933}, for the following datasets of the daily excess returns of 100 companies French chose.
\begin{enumerate}
 \item  Dot-com bubble \& recession~(1998-01-02/2007-12-31).

      In this   case $\p=100$ and $\n=2514$. (``Before 2007-2008 financial crisis'')

 \item After Lehman shock~(2010-01-04/2019-04-30).

In this   case $\p=100$ and $\n=2346$. (``After 2007-2008 financial crisis'')
\end{enumerate}

\begin{table}[ht] 
\centering
\begin{tabular}{ |c|c|c|l|c||c|c|c|}
\hline
Fama-French Portfolios & $\n$ &  $\fra{\p}{\n}$ & \small $\frac{\lam{1}{\C}}{\p}$
 &$\p$ &\small $\BS$&{\small$\ACT$}&Bai-Ng
\\
\hline
{\small 1998/2007~(Dot-com bubble \& recession)}& 2514 & .0398 & .658 & \multirow{2}*{100} & 2&4 & 4% 2
  \\  % std
				       %.6016 \\ norm
{\small 2010/2019~(After Lehman shock)}& 2346 & .0426 & .806 &     & 1 &3 & 5% 2
 \\ % std
				      % .7511\\ norm
 \hline
\end{tabular}
 \caption{The stock return datasets of the two Fama-French portfolios (1998-01-02/2007-12-31, 2010-01-04/2019-04-30).\label{tbl:FF}}
\end{table}

\begin{figure}[htbp]
  \begin{tabular}{c|c}
 \centering
 \begin{subfigure}[b]{0.47\linewidth}
    \centering
    \includegraphics[keepaspectratio, scale=0.45]{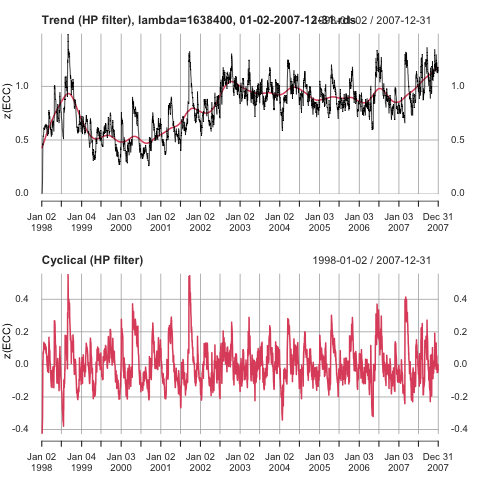}
\end{subfigure}
&  \begin{subfigure}[b]{0.47\linewidth}
    \centering
    \includegraphics[keepaspectratio, scale=0.45]{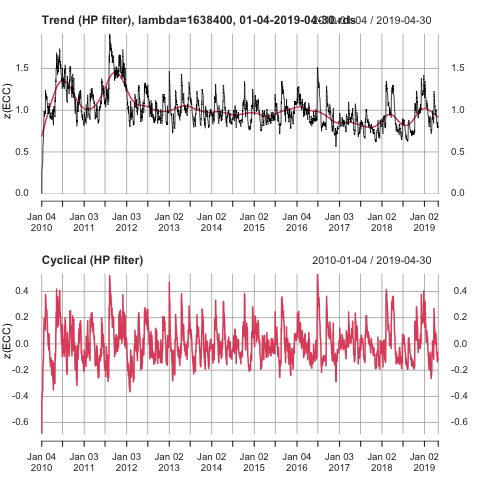}
  \end{subfigure}
\end{tabular}   
  \caption{For 1998-01-02/2007-12-31 (left) and  2010-01-04/2019-04-30 (right), Fisher's
 z-transform of \emph{equi-correlation coefficient} are computed by GJR
 GARCH+Engle-Kelly's dynamic equicorrelation. Their cyclical components
 (lower paletts) is computed by Hodrick-Prescott filter, and the residues (trends)
are overlaid in the upper paletts. \label{fig:FF}}
\end{figure}

As we see in Figure~\ref{fig:FF},
equicorrelation coefficient is low during dot-com bubble \emph{by speculation to  uncorrelated emergent stocks.}

We have two large groups of less speculative companies: 
\begin{itemize}
 \item the group of non-dot-com companies, in the dot-com bubble.
 \item the totality, in the  dot-com recession.
\end{itemize} 

Figure~\ref{fig:FF} is produced by the combination of a few techniques.
\begin{enumerate}
\item GJR GARCH with the correlation model being the Dynamic Equicorrelation~\citep{de}.
      
This computes the time-series of equicorrelation coefficient $\varrho$.
Our program utilizes \texttt{dccmidas}
package~\citep{candila21:_packag_dcc_model_garch_midas} of \texttt{R}.

 \item Fisher's z-transform $\frac{1}{2}\log\frac{1+\varrho}{1-\varrho}$.

       This bijection between $(-1,\, 1)$ and $(-\infty,\,\infty)$ makes
       the movement of $\varrho$ clearer. By z(ECC) time-series, we mean
       the time-series of the z-transform of $\varrho$. Fisher's
       z-transform often obeys
     asymptotically a normal distribution. In Figure~\ref{fig:FF},
 the z(ECC) time-series are on the upper paletts.
      
 \item  \emph{Hodrick-Prescott filter}~\citep{hodrick97:_postw_u} to
	compute the trend and the cyclical component of the z(ECC)
	time-series.
	
 From a given
 time-series $\y=[y_\j]_{\rangej}\in\Rset^{\n}$, \emph{Hodrick-Prescott filter of
		 parameter $\lambda>0$} extracts $$\tbx=[\tx_\j]_{\rangej}=\argmin_{\x\in\Rset^\n}\left(\norm{\y-\x}^2 +
\lambda\norm{\partial^2 \x}^2\right).$$ 
Here $\partial^2\x$ is the second
difference $\left[x_\j-2 x_{\j+1} + x_{\j+2}\right]_{1\le\j\le \n-2}\in\Rset^{\n-2}$ of $\x=\left[x_\j \right]_{\rangej}\in\Rset^\n$. Then, $\tx_\j$'s are weighted averages of
$y_1,\ldots,y_\n$ and satisfy
$\sumj(y_\j-\tx_\j)=0$. $\lim_{\lambda\to\infty}\tbx$ is the
arithmetical progression obtained from $\y$ by the least square
approach, but $\tbx$ is $\y$ for $\lambda=0$. $\tbx$ is called a
\emph{trend} of the time-series $\y$ and $\y-\tbx$ a \emph{cyclical
component} of $\y$.      

In Figure~\ref{fig:FF}, the cyclical components are on the lower
paletts, and the residues~(trends) are overlaid on the upper paletts.
\end{enumerate}
For the time-series of equicorrelation coefficient based on \cite{de},
\cite{wang20:_indus} applied a \emph{linear regression analysis} and
regarded the residue as the \emph{equi-correlation of industry return
(IEC)}. They claimed that ``We can see that the index displays prominent
countercyclical behavior in that it always rises during the recession
periods and declines during the expansion periods. This is expected
because economic recessions lead to greater comovement among stock
returns''. Fisher's z-transformation emphasizes the movement of the
time-series of equicorrelation coefficient~(ECC), and Hodrick-Prescott
filter improves the linear regression analysis of the ECC time-series.

As we see in Figure~\ref{fig:FF}, the trend
of z(ECC) time-series during period Dot-com bubble \& recession~(1998-01-02/2007-12-31), is more changing than that of
z(ECC) time-series after Lehman shock.
One may suppose that investors speculated
much to seemingly uncorrelated stocks during the Dot-com bubble.

\subsection{Binary multiple sequence alignment}\label{subsec:binary MSA}

We consider the binary \emph{multiple sequence alignment}~(MSA for short)
of an $\p$-residue (site) protein with $\n$ sequences where $\p=475$ and
$\n=2815$. The dataset is by the courtesy of Quadeer.  Quadeer et al.\@ did ``identify
groups of coevolving residues within HCV nonstructural protein 3 (NS3)
by analyzing diverse sequences of this protein using ideas from random
matrix theory and associated methods''~\cite[p.~7628]{querde}, and also found ``Sequence
analysis reveals three sectors of collectively evolving sites in
NS3. ..., there remained $\alpha=9$ eigenvalues greater than
$\lambda^{\mbox{rnd}}_{\mbox{max}}$, presumably representing intrinsic
correlations''\cite[p.~7631]{querde}. They detected
signals by a randomization from the data.

On the statistical model of \cite{querde}, 
Morales-Jimenez, one of the authors of \cite{quadeer18:_co_hiv_hcv},  commented ``the majority of variables
(protein positions in the genome) are essentially independent, and there
are just some small groups of variables which are correlated, giving
rise to the different spikes. These group of variables can be modeled
with equi-correlation, but the size of these groups is modeled as
fixed, i.e., not growing with the dimension of the protein. That leads
to a non-divergent spiked model, like the one considered in our Stat Sinica paper''
\cite{morales2021asymptotics}.
    
Nonetheless, for the dataset of the binary MSA~\citep{querde}, the broken-stick rule and the adjusted
correlation thresholding work well.
\begin{table}[ht]\centering
  \begin{tabular}{|c|c|c|c|c|c||c|c|c|}
   \hline
     & $\n$ & $\p/\n$    & $\lam{1}{\C}/\p$  &$\p$ & $\BS$ & $\ACT$ & Bai-Ng
   \\
\hline
  \small{ MSA }& 2815&0.1687 & 0.0216 & 475 & 3 & 10 & 4 \\
\hline
  \end{tabular}
 \caption{The binary MSA dataset.
  \label{tbl:MSA}}
\end{table}
\begin{itemize}
 \item  The number 3 of the sectors is detected by the broken-stick rule
	 $\BS(\C)=3$.

\item The number $\alpha=9$ of eigenvalues greater than $\lambda^{\mathrm{rnd}}_{\mathrm{max}}$ is close to $\ACT(\C)=10$.
      Here $\ACT$ is studied with the proportional limiting regime $\kr,\,\krr>0$.
\end{itemize}

To end this section about the broken-stick rule for the stock
return datasets and the binary MSA dataset, we observe:
Fama-French portfolio~(Dot-com bubble \& recession~(1998-01-02/2007-12-31)), and the binary MSA dataset fit a CLFM with $r=2$ but not an ACFM. The Fama-French portfolio~(after Lehman shock~(2010-01-04/2019-04-30)) fits both ACFM and a CLFM with $r=1$.

In conclusion, the pair of a CLFM and BS rule may be more useful than the pair of a model and ACT~\citep{FGZ22}, to estimate the number of factors of the binary MSA dataset; the number of factors estimated by our approach is the same as that estimated by \citep{querde}.
On the other hand, ACT may be useful for Fama-French portfolios, as they can predict the Fama-French factors.

In theory, to distinguish a model CLFM of $r=1$ from an ACFM, one may think of
 $\D=\diag(D_{11},\ldots,D_{\p\p})$. The diagonal entries $D_{ii}$ are asymptotically equal to the corresponding entries of $\bDelta$ (their ratios converge almost surely to 1 by Lemma~\ref{lem:D/Delta_identity}). And the diagonal entries of $\bDelta$, are just the Euclidean norm of rows of $\bLambda$.

Once we find a theoretically appropriate scaling and shifting of $D_{ii}$'s to uncover the concentration of $D_{ii}$'s, we could decide whether a given dataset fit a CLFM, an ACFM, or the generalization.

\section{Conclusion} \label{sec:conclusion}

We suppose the proportional limiting regime for factor models. We
established a general theorem for the asymptotic equispectrality of $\C$
and the naturally normalized sample covariance matrix for factor models.
Then, we derived the following assertions: for the introduced two models (CLFM and ACFM), the limiting largest bounded
eigenvalue and the limiting spectral distribution of the sample
correlation matrix $\C$, are scaling of those of $\C$ of the i.i.d.\@
case, with the scaling constant $1-\rho$. Here $\rho$ is ``a reduced
correlation coefficient''.  The largest eigenvalue of $\C$ divided by
the order $\p$ of $\C$ converges almost surely to $\rho$.

Notably, for an ACFM, the BS rule computes not the rank of the factor loading matrix $\bL$, but the essential rank
$(=1)$~(Theorem~\ref{thm:asympt_CLF}).  In other words, the
deterministic decreasing vector $\y=(\sum_{j=k}^K (j K)^{-1})_{k=1}^K$
of the BS rule examines an intrinsic structure of $\bL$, for an ACFM.
Moreover, $\y\in\Rset^K$ is the descending list of the limits of the
eigenvalues of $\bL\bL^\top\in\Rset^{\p\times\p}$ in $\p\to\infty$, for
a random factor loading matrix $\bL=\left[B_{ik}
\sqrt{y_{ik}}\right]_{\rangei, \rangek}$ where for each $i$ $(\rangei)$,
\begin{itemize}
 \item $\b_i=(B_{ik})_{k=1}^K$ is a random vector of
independent Rademacher
variables,  
\item $\y_i=(y_{ik})_{k=1}^K$ is such that $y_{ik}$ is the length of the
$k$th longest subintervals obtained by $K-1$ independent uniformly
distributed separators of $[0,\,1]$;
\end{itemize}
and $\b_1,\ldots,\b_\p,\ \y_1,\ldots,\y_\p$ are independent.  In this
case, $\bL$ does not satisfy the condition of an ACFM. A limit theory
for the order statistics of the eigenvalues of a sample
covariance/correlation matrix is awaited to analyze the BS
rule.

\bibliographystyle{agsm}\bibliography{Limits-of-BSrule}
\newpage\appendix
\setcounter{page}{1}
\pagenumbering{roman}

Supplementary material for the manuscript  \small ``Asymptotic locations of bounded and
unbounded eigenvalues of sample correlation
matrices of certain factor models - application to a components retention rule'' 

\bigskip
\bigskip
This supplementary article proves  Theorems~\ref{thm:perturbation_multiplicative},
\ref{thm:asymptotic equi-spectral},
\ref{thm:main_CLFM_largest}, 
\ref{thm:CLFM_BS}, and \ref{thm:asympt_CLF} of the main manuscript and collects some useful lemmas. The literature cited in this supplementary material are listed in References of the main text.

\section{Proofs of theorems} \label{sec:proof_theorems}

\subsection{Proof of Theorem~\ref{thm:perturbation_multiplicative}}
Proposition~\ref{prop:ineq_Fan_multiplication} clearly implies the latter inequality
    \begin{align}
       \sv{i}{\A\B}\le\sv{1}{\A}\sv{i}{\B}. \label{ineq:sAB_sAsB} 
    \end{align}
Now we prove the other inequality $\sv{m}{\A}\sv{i}{\B}\le \sv{i}{\A\B}$. If $\sv{m}{\A}=0$, then there is nothing to prove. We now assume that $\sv{m}{\A}>0$. Then it is necessary that $m\le n$ and the Hermitian matrix $\A\A^*$ is positive definite. Let $\A^+$ be the Moore-Penrose inverse of $\A$ defined as
$$\A^+:=\A^*(\A\A^*)^{-1}.$$
By \eqref{ineq:sAB_sAsB} and $\A\A^+=\I_m$,
\begin{align}
 \sv{i}{\B}=\sv{i}{\B\A\A^+}\le\sv{i}{\B\A}\sv{1}{\A^+}. \label{ineq:sA_sABsB+}
\end{align}
Noticing that 
$$\sv{1}{\A^+}=\sqrt{\lambda_1(\A^*(\A\A^*)^{-2}\A)}=\sqrt{\lambda_1((\A\A^*)^{-1})}=\frac{1}{\sv{m}{\A}},$$
we obtained the wanted inequality by multiplying $\sv{m}{\A}$ on both sides of \eqref{ineq:sA_sABsB+}.

\subsection{Proof of Theorem~\ref{thm:asymptotic equi-spectral}}
Note that $\C=(\D^{-1/2}\bDelta^{1/2})(\bDelta^{-1/2}\S\bDelta^{-1/2})(\bDelta^{1/2}\D^{-1/2})$ and the corresponding formula for $\tC$. By Theorem~\ref{thm:perturbation_multiplicative}, we have only  to prove that all the singular values of $\D^{-1/2}\bDelta^{1/2}$ and $\tD^{-1/2}\bDelta^{1/2}$ are concentrated near $1$, as follows:

\begin{lem}[Key]\label{lem:D/Delta_identity} Assume the same condition as Theorem~\ref{thm:asymptotic equi-spectral}. Namely, suppose that we are given a $K$-factor model with \protect{\ref{ass:Gamma_row_ne0}}-\protect{\ref{ass:karoui_cond_or_identity}}.
\begin{align*}
&\lim_\kr\snorm{\tD\bDelta^{-1}-\I_{\p} } =\lim_\kr\snorm{\D\bDelta^{-1} - \I_{\p} } =0&(a.s.).
\end{align*}
\end{lem}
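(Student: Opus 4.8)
The plan is to turn the matrix statement into a uniform scalar one and then to separate the factor contribution from the noise contribution. Since $\tD$, $\D$ and $\bDelta$ are diagonal with positive entries, $\snorm{\tD\bDelta^{-1}-\I_\p}=\max_{1\le i\le\p}\abs{\tD_{ii}/\bDelta_{ii}-1}$ and similarly for $\D$, so it suffices to prove $\max_i\abs{\tD_{ii}/\bDelta_{ii}-1}\to0$ and $\max_i\abs{\D_{ii}/\bDelta_{ii}-1}\to0$ a.s. Write $\bgamma^i=\begin{bmatrix}\bell^i & \bm{\lambda}^i\end{bmatrix}$ for the $i$th row of $\bGamma$, where $\bell^i$ is the $i$th row of $\bL$ and $\bm{\lambda}^i$ the $i$th row of $\bLambda$, so $\bDelta_{ii}=\norm{\bell^i}^2+\norm{\bm{\lambda}^i}^2>0$ by \ref{ass:Gamma_row_ne0}. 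With $\bff_t,\bm{\psi}_t$ the $t$th columns of $\bF,\bPsi$, the $(i,t)$ entry of $\X-\bM$ is $\bell^i\bff_t+\bm{\lambda}^i\bm{\psi}_t$, which gives
$$\frac{\tD_{ii}}{\bDelta_{ii}}=\alpha_i^2P_i+2\alpha_i\beta_iQ_i+\beta_i^2R_i,\qquad \alpha_i:=\frac{\norm{\bell^i}}{\norm{\bgamma^i}},\ \ \beta_i:=\frac{\norm{\bm{\lambda}^i}}{\norm{\bgamma^i}},$$
where $\mathbf{v}^i:=\bell^i/\norm{\bell^i}$, $\mathbf{w}^i:=\bm{\lambda}^i/\norm{\bm{\lambda}^i}$ are unit vectors (drop a term if the vector is $\o$), and $P_i:=\tfrac1\n\sum_t(\mathbf{v}^i\bff_t)^2$, $Q_i:=\tfrac1\n\sum_t(\mathbf{v}^i\bff_t)(\mathbf{w}^i\bm{\psi}_t)$, $R_i:=\tfrac1\n\sum_t(\mathbf{w}^i\bm{\psi}_t)^2$. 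Since $\alpha_i^2+\beta_i^2=1$, one has $\abs{\tD_{ii}/\bDelta_{ii}-1}\le\sup_i\abs{P_i-1}+\sup_i\abs{Q_i}+\sup_i\abs{R_i-1}$, so it is enough to drive these three suprema to $0$ a.s. The data-centered case is the same up to the rank-one correction $\D_{ii}=\tfrac{\n}{\n-1}\bigl(\tD_{ii}-(\bgamma^i\bar{\mathbf{z}})^2\bigr)$ with $\bar{\mathbf{z}}=\n^{-1}\sum_t\mathbf{z}_t$; here $(\bgamma^i\bar{\mathbf{z}})^2/\bDelta_{ii}=(\alpha_i\mathbf{v}^i\bar{\bff}+\beta_i\mathbf{w}^i\bar{\bm{\psi}})^2$ is an average of mean-zero linear forms, so it is controlled exactly like $Q_i$ and I bundle it with the cross term.

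The factor part $\sup_i\abs{P_i-1}\to0$ is elementary: $P_i=\sum_k(v^i_k)^2\,\tfrac1\n\sum_t f_{kt}^2+\sum_{k\ne l}v^i_kv^i_l\,\tfrac1\n\sum_t f_{kt}f_{lt}$, and since $K$ is fixed and the $f_{kt}$ are i.i.d.\ mean $0$, variance $1$, the strong law gives $\tfrac1\n\sum_t f_{kt}^2\to1$ and $\tfrac1\n\sum_t f_{kt}f_{lt}\to0$ a.s.\ for all $k\ne l$; together with $\abs{v^i_k}\le1$ and $\sum_k(v^i_k)^2=1$ this yields $\sup_i\abs{P_i-1}\to0$ a.s.\ using nothing about the factors beyond finite second moments (so heavy-tailed factors are admissible), and likewise $\sup_i\abs{\mathbf{v}^i\bar{\bff}}\le\norm{\bar{\bff}}\to0$ a.s. For the cross term, conditioning on $\bF$ makes $Q_i$ a mean-zero average of independent-in-$t$ products $(\mathbf{v}^i\bff_t)(\mathbf{w}^i\bm{\psi}_t)$ with conditional variance $P_i/\n$; truncating the $\psi_{it}$ (and $f_{kt}$), applying a moment/Markov inequality, a union bound over the $\p=O(\n)$ indices, and Borel--Cantelli along a geometric subsequence (with the usual interpolation) gives $\sup_i\abs{Q_i}\to0$ a.s., and the corrections $\mathbf{w}^i\bar{\bm{\psi}}$ are the unconditional, second-moment version of the same estimate. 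The only arithmetic point is that every moment that appears, e.g.\ $\Exp(\mathbf{w}^i\bm{\psi}_1)^{2m}$, is bounded uniformly in $i$, because $\sum_k(w^i_k)^{2j}\le\bigl(\sum_k(w^i_k)^2\bigr)^j=1$ for all $j\ge1$.

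The noise part $\sup_i\abs{R_i-1}\to0$ is the main obstacle, and it is exactly where \ref{ass:karoui_cond_or_identity} is needed. Since $R_i=(\mathbf{w}^i)^\top(\n^{-1}\bPsi\bPsi^\top)\mathbf{w}^i$, one cannot bound $\sup_i$ by a supremum over all unit vectors: the eigenvalues of $\n^{-1}\bPsi\bPsi^\top$ asymptotically fill $\bigl[(1-\sqrt\c)^2,(1+\sqrt\c)^2\bigr]$, so that supremum does not tend to $1$. What saves us is that $\mathbf{w}^1,\dots,\mathbf{w}^\p$ is a \emph{fixed} family of only $\p=O(\n)$ directions (the normalized rows of $\bLambda$), and for each $i$ the quantity $R_i-1=\n^{-1}\sum_t\bigl[(\mathbf{w}^i\bm{\psi}_t)^2-1\bigr]$ is a mean-zero average of independent-in-$t$ terms whose variance $\Exp(\mathbf{w}^i\bm{\psi}_1)^4-1$ is bounded uniformly in $i$ by \ref{ass:moment4} and $\sum_k(w^i_k)^4\le1$. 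Under \ref{ass:karoui_cond_or_identity}(b), $\bLambda$ is diagonal, $\mathbf{w}^i=\mathbf{e}_i$, $R_i=\n^{-1}\sum_t\psi_{it}^2$, and the claim is the classical uniform law of large numbers for the diagonal of a sample covariance matrix of an array with finite fourth moments and $\p\asymp\n$ (truncation at a suitable $\n$-dependent level, a fourth-moment maximal inequality, Borel--Cantelli along a geometric subsequence), which I would invoke from the auxiliary lemmas. Under \ref{ass:karoui_cond_or_identity}(a) I would follow \cite{karoui09concentration}: truncating $\psi_{it}$ at the level dictated by $\Exp\bigl(\abs{\psi}^4(\log\abs{\psi})^{2+2\varepsilon}\bigr)<\infty$, the extra logarithmic moment is precisely what makes the per-index deviation bound $\Pr(\abs{R_i-1}>\delta)$ summable against the $\p=O(\n)$ terms of the union bound, giving $\sup_i\abs{R_i-1}\to0$ a.s.

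Putting the three estimates together yields $\max_i\abs{\tD_{ii}/\bDelta_{ii}-1}\to0$ a.s., and adding the rank-one correction (handled as in the cross term) gives the same for $\D_{ii}/\bDelta_{ii}$, which is the lemma. The effort is concentrated entirely in $\sup_i\abs{R_i-1}\to0$: the factor term is a fixed-dimensional strong law, the cross and mean terms are a routine truncation-and-union-bound, and only the noise term has to beat the \emph{growing} number $\p$ of directions $\mathbf{w}^i$ — which is exactly why the dichotomy in \ref{ass:karoui_cond_or_identity} (diagonal $\bLambda$, or El Karoui's logarithmic moment) is genuinely needed.
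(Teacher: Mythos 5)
Your overall architecture is sound and, for the diagonal case \ref{ass:karoui_cond_or_identity}(b), essentially reproduces the paper's argument: reduce $\snorm{\tD\bDelta^{-1}-\I_\p}$ to a maximum of scalar ratios, split each diagonal entry into a factor part, a cross part, and a noise part, kill the factor part by a fixed-dimensional strong law, and handle the terms indexed by $i=1,\dots,\p$ with a maximal law of large numbers for double arrays (the paper invokes Bai--Yin's Lemma~2 for exactly the four terms you list, and treats the sample-mean correction the same way). You also correctly locate the crux in the noise quadratic form $R_i=(\mathbf{w}^i)^\top(\n^{-1}\bPsi\bPsi^\top)\mathbf{w}^i$ and correctly observe that one cannot pass to a supremum over all unit vectors.

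The gap is in case \ref{ass:karoui_cond_or_identity}(a), i.e.\ non-diagonal $\bLambda$, which is the only genuinely hard part of the lemma. You assert that after truncation ``the extra logarithmic moment is precisely what makes the per-index deviation bound $\Pr(|R_i-1|>\delta)$ summable against the $\p=O(\n)$ terms of the union bound.'' It is not. $R_i-1$ is an average of $\n$ i.i.d.\ centered terms $(\mathbf{w}^i\bm{\psi}_t)^2-1$ whose variance is controlled by \ref{ass:moment4}, but whose higher moments require moments of $\psi_{11}$ beyond the fourth, which are not assumed; the logarithmic factor $(\log|\psi|)^{2+2\varepsilon}$ is far too weak to upgrade a Markov/moment bound from the Chebyshev rate $O(1/\n)$ to anything summable against $\p\asymp\n$ indices (and after truncation at level $M_\n$ the summands can still be of size up to $\norm{\bm{\psi}_t}^2\asymp\p M_\n^2$, so Bernstein-type bounds do not rescue this either). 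The actual mechanism in the paper is entirely different: the log-moment condition serves only to guarantee, via Borel--Cantelli, that truncation at level $\sqrt{\n/(\log\n)^{1+\varepsilon}}$ is almost surely eventually inactive; the summable union bound then comes from Talagrand/Ledoux concentration of measure applied, conditionally on $\bF$, to the \emph{convex $1$-Lipschitz} function $\bPsi\mapsto\norm{\bell^i\bF+\bg^i\bPsi}/\sqrt{\n}$ of the bounded truncated entries, which yields a tail $4\exp\bigl(-\delta^2(\log\n)^{1+\varepsilon}/16\bigr)$ around the conditional median --- and this is followed by a nontrivial median-to-mean comparison and a computation showing the conditional mean tends to $1$. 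None of this (Lipschitz concentration, the role of the median, working with the norm $d_i$ rather than its square) appears in your sketch, and the route you describe in its place would fail. A smaller instance of the same issue affects your ``routine'' treatment of the cross term $Q_i$ and of $\max_i|\mathbf{w}^i\bar{\bm{\psi}}|$ in the non-diagonal case: with $\p\asymp\n$ indices and only the stated moments, a naive truncation-plus-union-bound does not close, which is precisely why the paper folds these terms into the same Lipschitz function $d_i$ in case (a) and relies on the (nontrivial) Bai--Yin maximal law in case (b).
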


We now give the proof of this lemma. Recall that $\C$ is invariant under scaling and shifting of data. In the proof, we assume that $\bmu=\o$ and the length of each row vector of $\bGamma$ is $1$: $\norm{\bgamma^i}=1$ for $i=1,\dots,\p$ without loss of generality. Then $\bDelta=\I_\p$. The almost sure convergence in the limiting regime $\kr,\,\krr$ is denoted by $\cas$. This proof is separated by two parts: the first assuming \ref{ass:karoui_cond_or_identity}(b), and the second assuming \ref{ass:karoui_cond_or_identity}(a). 

\begin{proof}[Proof under \protect{\ref{ass:karoui_cond_or_identity}}(b)]
Let $\bLambda=\diag(\sigma_1,\dots,\sigma_\p)$. We prove that $\snorm{\tD-\I}\cas 0$. It suffices to confirm
\begin{align}
\lim_\kr \max_{\rangei}\abs*{\frac{\norm{\x^i}^2}{\n}
 -1}&=0&(a.s.). \label{cnv:max_xi}
\end{align}
Let $\bL=[\ell_{ik}]_{\rangei,1\le k\le K}\in \Rset^{\p\times K}$. 
Because of $\norm{\bgamma^i}=1$, we have 
\begin{align}\sum_{k=1}^K \ell_{ik}^2 + \sigma_i^2=1.\label{bounded loading}
\end{align}
Thus
 \begin{align*}
 \frac{\norm{\x^i}^2}{\n}-1
 &=
\left( 
\begin{aligned}
 \summ\ell_{ik}^2
 \sumj\left(\frac{\fkt^2}{\n}-1\right)
 +\sum_{1\le k<k'\le K}2\ell_{ik}\ell_{ik'}
 \sumj\frac{\fkt{f}_{k'\j}}{\n}
\\
 + \summ{2\sigma_i\ell_{ik}\sumj}\frac{\fkt\eit}{\n}
+\sigma_i^2\left(\sumj\frac{\eit^2}{\n}-1\right)
\end{aligned}\right).
\end{align*}
Here $\max\left\{\ell_{ik}^2,\,2|\ell_{ik}\ell_{ik'}|,\,2|\sigma_i\ell_{ik}|,\,\sigma_i^2\right\}\le 1$ for $i=1,\dots,\p$, $1\le k\ne k'\le \n$. 
As a result, $\max_{{\rangei}}\abs*{\frac{\norm{\x^i}^2}{\n} - 1}$ is at most

\begin{dmath}
\summ\abs*{\sumj{\displaystyle\frac{\fkt^2}{\n}} - 1}
+ \displaystyle\sum_{1\le k<k'\le K}\abs*{
 \sumj{\frac{\fkt{f}_{k'\j}}{\n}}}
+\summ\max_\rangei\abs*{{\sumj}\frac{\fkt\eit}{\n}}
+\max_\rangei\abs*{\sumj\frac{ \eit^2}{\n} - 1}.
\label{eq:max_i|x^i|2/T-1}
\end{dmath} 

Then, for each possible $1\le k\le K$ and $1\le k<k'\le K$, we can prove
 that each of the four
 term of \eqref{eq:max_i|x^i|2/T-1} tends to 0 almost surely, as follows:
\begin{enumerate}[label=(\roman*)]
 \item $\set{\fkt^2\colon \j\ge1}$ is an array of i.\@i.\@d.\@ random variables with mean $1$. Use Lemma~\ref{lem:bai-yin2} with $\alpha=1, \beta=0$.
\item $\set{\fkt {f}_{k'\j} \colon\j\ge1}$ for $k\ne k'$ is an array of centered i.\@i.\@d.\@ random variables, since $\fkt$ and $f_{k',\j}$ are independent and
centered. Use Lemma~\ref{lem:bai-yin2} with $\alpha=1, \beta=0$.
\item $\set{\fkt\eit\colon\rangei,\,\rangej}$ is a double
array of centered i.\@i.\@d.\@ random variables, since both of $\fkt$ and $\eit$ are independent and centered. 
From $\var\fkt=\var\eit=1$ by Definition~\ref{def:K-FM}, we derive
$$\Exp|\fkt\eit|^2=\Exp|\fkt|^2\Exp|\eit|^2<\infty.$$
Use Lemma~\ref{lem:bai-yin2} with $\alpha=1, \beta=1$.
\item $\set{\eit^2\colon\rangei,\,\rangej}$ is a double
array of i.\@i.\@d.\@ random variables with mean $1$ and $\Exp|\eit|^4<\infty$ by \protect{\ref{ass:moment4}}. Use Lemma~\ref{lem:bai-yin2} with $\alpha=\beta=1$.
\end{enumerate}
Hence, \eqref{cnv:max_xi} is confirmed.

Now we prove that $\snorm{\D-\I}\cas 0$ as $\kr$. 
It suffices to guarantee
\begin{align}\lim_{\kr} \max_{\rangei}\abs*{\frac{\norm{{\x^i-{\bmx}^i}}^2}{\n} -1}&=0&(a.s.).
\label{cnv:max_xi_bmxi}
\end{align}
Let $\mx_i=\n^{-1}\sumj \xit$ be the sample average of the $i$th row of $\X$. Then by $\norm{\x^i -{\bmx}^i}^2 = \norm{\x^i}^2 - \n\abs*{\mx_i}^2$, 
\begin{align}
\max_{{\rangei}}\abs*{\frac{\norm{\x^i-\bmx^i}^2}{\n}-1}\le \max_{{\rangei}}\abs*{\frac{\norm{\x^i}^2}{\n}-1}+\max_{{\rangei}}\abs*{\mx_i}^2. \label{eq:max_xi_bmxi}
\end{align}
The first term of the right side converges almost surely to 0, by
 \eqref{cnv:max_xi}.

As for the second term $\max_{{\rangei}}\abs*{\mx_i}^2$, from the definition of $\mx_i$,  $|\ell_{ik}|\le 1$,  and $|\sigma_i|\le 1$, we get
\begin{align*} 
\max_{\rangei}\abs*{\mx_i} \le \summ \abs*{\sumj\frac{\fkt }{\n}} +
 \max_{\rangei} \abs*{\sumj\frac{\eit}{\n}}.
\end{align*}
The first term of the right side converges almost surely to 0, as for each $1\le k\le K$, $\set{ \fkt\colon \j\ge1}$ is an array of centered i.\@i.\@d.\@ random variables. The second term of the right side converges almost surely to 0 by Lemma~\ref{lem:bai-yin2}, since $\set{\eit\colon\rangei,\,\rangej}$ is a double array of i.\@i.\@d.\@ random variables. Therefore the right side of \eqref{eq:max_xi_bmxi} converges to $0$ almost surely. Consequently, \eqref{cnv:max_xi_bmxi} is guaranteed. 
\end{proof}

\begin{proof}[Proof under \ref{ass:karoui_cond_or_identity}(a)] 
This part is inspired by the proof of Lemma~4 in \cite{karoui09concentration}. 
First, we proceed the truncation on the random variables $\eit$. 
Let 
$$\mathbf{T}_\p=\left[\eit\Indctr{|\eit|<\sqrt{\frac{\n}{(\log\n)^{1+\varepsilon}}}}\right]_{i,\j}$$
be the $\p\times \n$ matrix with truncated entries, where `$\mathrm{I}$' is the indicator function and $\varepsilon>0$. Then we re-center the entries of $\mathbf{T}_\p$ by defining
$$\hbPsi=\mathbf{T}_\p-e_\n \ones{\p\times \n},$$
where $$e_\n=\Exp\left(\eit\Indctr{|\eit|<\sqrt{\n/(\log\n)^{1+\varepsilon}}}\right)$$ and $\ones{\p\times\n}$ is the $\p\times\n$ matrix of unity. Let $\heit$ be the entries of $\hbPsi$.

We will prove that after replacing $\bPsi$ with $\hbPsi$, the diagonal entries of $\tD$ and $\D$ do not change a lot. 
By \ref{ass:karoui_cond_or_identity} and \cite[Lemma~2]{karoui09concentration}, almost surely for large enough $\p$, we have $\bPsi=\mathbf{T}_\p$, so almost surely, for large enough $\p$, the truncation does not modify $\D$ and $\tD$. Also $\D$ is invariant under the translation, so the centering does not impact $\D$. 
For $\tD$, because $\eit$ is centered and has finite forth moment by \ref{ass:moment4},  we get
$$\left(\frac{\n}{(\log\n)^{1+\varepsilon}}\right)^{3/2} |e_\n| \ \le\ \Exp \left(|\eit|^4\Indctr{|\eit|\ge\sqrt{\frac{\n}{(\log\n)^{1+\varepsilon}}}}\right).$$
Here the right side is $O(1)$.
Thus $e_\n=o(\n^{-1})$. In the limiting regime $\kr,\krr$,
$$\snorm{\bPsi-\hbPsi}= e_\n \snorm{\ones{\p\times \n}} =  e_\n \sqrt{\p \n} \,\cas0.$$
Let $\hbx^i$ be the $i$th row of data after truncation and re-centering. 
As we have assumed $\bmu=\o$ without loss of generality, we can write
$$\begin{bmatrix}\hbx^1\\ \vdots \\ \hbx^\p       
      \end{bmatrix}=
       \bL\bF + \bLambda\hbPsi.$$
 Let $\bg^i$ be the $i$th row vector of $\bLambda$. 
Since we assumed the length of each row vector of $\bGamma$ is 1 at the beginning of this proof, we get $\norm{\bell^i}\le1$ and $\norm{\bg^i}\le 1$. 
Hence
\begin{align*}
   \max_\rangei\left|\frac{\norm{\hbx^i}^2-\norm{\x^i}^2}{\n}\right| &\le \frac{1}{\n}\max_\rangei\left|2\bell^i\bF(\bPsi-\hbPsi)^\top{(\bg^i)}^\top\right|+\frac{1}{\n}\max_\rangei\left|\bg^i(\bPsi\bPsi^\top-\hbPsi\hbPsi^\top){(\bg^i)}^\top\right| \\
   & \le \frac{2}{\n}\snorm{\bF}\snorm{\bPsi-\hbPsi}+\frac{1}{\n}\snorm{\bPsi-\hbPsi}\snorm{\bPsi^\top}+\frac{1}{\n}\snorm{\hbPsi}\snorm{\bPsi^\top-\hbPsi^\top} \\
    & \cas 0.
\end{align*}

Now define the function $d_i:\Rset^{K\n}\times \Rset^{\p\n}\xrightarrow{}[0,\,\infty)$ by 
$$d_i(\bF,\bPsi):=\norm{\bell^i\bF+\bg^i\bPsi}.$$
From the proof of \cite[Lemma~4]{karoui09concentration}, $d_i$ is convex
 and $1$-Lipschitz (as $\norm{\bgamma^i}=1$).
 Let $\mu_f$ and
 $\mu_{\hpsi}$ be the distribution of $f_{k\j}$ and $\heit$,
 respectively. Conditioning on $\bF$ (i.e., for every fixed $\bF$), we
 apply \cite[Corollary~4.10, pp.77--78,
 Eq.(4.10)]{ledoux2001concentration} on the variables $\heit$. By
 $|\heit|\le \sqrt{\n/(\log \n)^{1+\varepsilon}}$, it holds that for all $\delta>0$,
\begin{align}
\int_{\Rset^{\p\n}}\Indctr{\,\abs*{\frac{d_i(\bF,\hbPsi)}{\sqrt{\n}}-\MedianF\left(\frac{d_i(\bF,\hbPsi)}{\sqrt{\n}}\right)}>\delta}\dd\mu_{\hpsi}^{\otimes\p\n}(\hbPsi)&\le 4\exp\left(-\frac{\delta^2(\log\n)^{1+\varepsilon}}{16}\right) . \label{eq:cond_talagrand}
\end{align}
Here $\MedianF$~($\ExpF$, $\varF$, resp.) represents \emph{a} conditional median~(the conditional expectation, the conditional variance, resp.) knowing $\bF$. 
By \cite{10.1214/aop/1176996411}, $\MedianF(\fra{d_i(\bF,\hbPsi)}{\sqrt{\n}})$ is measurable, so we can integrate \eqref{eq:cond_talagrand}  with  $\mu_f^{\otimes K\n}$ by $\bF$.
Then, we get, for all $\delta>0$,
\begin{align*}
\P\left(\abs*{\frac{d_i(\bF,\hbPsi)}{\sqrt{\n}}-\MedianF\left(\frac{d_i(\bF,\hbPsi)}{\sqrt{\n}}\right)}>\delta\right)&\le 4\exp\left(-\frac{\delta^2(\log\n)^{1+\varepsilon}}{16}\right).
\end{align*}
Readily, 
\begin{align*}\P\left(\max_\rangei\abs*{\frac{d_i(\bF,\hbPsi)}{\sqrt{\n}}-\MedianF\left(\frac{d_i(\bF,\hbPsi)}{\sqrt{\n}}\right)}>\delta\right)&\le 4\p\exp\left(-\frac{\delta^2(\log\n)^{1+\varepsilon}}{16}\right).
\end{align*}
Borel-Cantelli's lemma yields
\begin{align}
    \max_\rangei\left|\frac{d_i(\bF,\hbPsi)}{\sqrt{\n}}-\MedianF\left(\frac{d_i(\bF,\hbPsi)}{\sqrt{\n}}\right)\right|\cas 0. \label{eq:d_i_to_median}
\end{align}

On the other hand, by \cite{mallows91:_anoth_ocinn}, and then by applying \cite[Proposition~1.9]{ledoux2001concentration} to $\fra{d_i(\bF,\hbPsi)}{\sqrt{\n}}$ with inequality \eqref{eq:cond_talagrand}, we have
\begin{align}
    & \left|\MedianF\left(\frac{d_i(\bF,\hbPsi)}{\sqrt{\n}}\right)-\ExpF\left(\frac{d_i(\bF,\hbPsi)}{\sqrt{\n}}\right)\right|\le \sqrt{\varF\left(\frac{d_i(\bF,\hbPsi)}{\sqrt{\n}}\right)}
   \le C(\log\n)^{-(1+\varepsilon)/2}\label{ineq:median mean}
\end{align}
where $C$ is an absolute constant independent of $i$.
By \eqref{bounded loading} and the first two arguments (i, ii) just below \eqref{eq:max_i|x^i|2/T-1}, we can check 
$$\max_\rangei\left|\frac{1}{\n}\norm{\bell^i\bF}^2-\norm{\bell^i}^2\right|\cas 0.$$
Moreover, by Lebesgue's dominated convergence theorem, the premise $\var\psi_{11}=1$ deduces $\var{\hpsi_{11}}\to1$.
Hence 
\begin{align*}
&\max_\rangei\left|\ 
\abs*{
\left(\ExpF\frac{d_i(\bF,\hbPsi)}{\sqrt{\n}}\right)^2
-1}
-
\varF\frac{d_i(\bF,\hbPsi)}{\sqrt{\n}}
\ \right|
\\
&\le\max_\rangei\left|
\varF\frac{d_i(\bF,\hbPsi)}{\sqrt{\n}}
+
\left(\ExpF\frac{d_i(\bF,\hbPsi)}{\sqrt{\n}}\right)^2
-1\right|
=\max_\rangei\left|\ExpF\frac{d_i^2(\bF,\hbPsi)}{\n}-1\right|
\\
&\le \max_\rangei\left\{\left|\frac{1}{\n}\norm{\bell^i\bF}^2-\norm{\bell^i}^2\right|+\abs*{\var(\hpsi_{11})-1}\,\norm{\bg^i}^2\right\} 
\\
&\cas 0.\end{align*}
As a result, by the second inequality in \eqref{ineq:median mean}, 
\begin{align*}
\max_\rangei
\abs*{
\ExpF\frac{d_i(\bF,\hbPsi)}{\sqrt{\n}}
-1}\le \max_\rangei
\abs*{
\left(\ExpF\frac{d_i(\bF,\hbPsi)}{\sqrt{\n}}\right)^2
-1}\cas 0.
\end{align*}
Again by\eqref{ineq:median mean}, 
\begin{align*}
 \max_\rangei\left|\MedianF\left(\frac{d_i(\bF,\hbPsi)}{\sqrt{\n}}\right)-1\right|\cas 0,
\end{align*}
from which \eqref{eq:d_i_to_median} concludes
$$\max_\rangei\left|\frac{d_i(\bF,\hbPsi)}{\sqrt{\n}}-1\right|\cas 0.$$

For $\D$, we use the similar arguments as \cite{karoui09concentration} with the above adaptions. We omit the details. The proof is complete. 
\end{proof}

\subsection{Proof of \protect{Theorems~\ref{thm:main_CLFM_largest}}, \protect{\ref{thm:CLFM_BS}} and \protect{\ref{thm:asympt_CLF}}}
\label{prf:prop_main}

To establish Theorem~\ref{thm:main_CLFM_largest}, by Theorem~\ref{thm:perturbation_multiplicative} and  $\bDelta=(L^2+\sigma^2)\I$, it is enough to determine the asymptotic locations of largest eigenvalues of sample covariance matrix $\S$ of a CLFM.
\begin{thm}\label{thm:paraphrased main theorem} If a CLFM satisfies \ref{ass:L_rank_r_sr_infty}, then for $k=1,\dots,r$,
  \begin{align}
   & \lim_\kr \frac{\lambda_{k}(\tS)}{\lambda_k(\bL\bL^\top)} = \lim_\kr \frac{\lam{k}{\S}}{\lambda_k(\bL\bL^\top)} = 1 & (a.s.), \label{eq:para_main_CLFM_unbounded}\\
   & \lim_\kr \lambda_{r+1}(\tS)=\lim_\kr \lambda_{r+1}(\S) = \sigma^2(1+\sqrt{c})^2 & (a.s.). \label{eq:para_main_CLFM_bounded}
\end{align}
\end{thm}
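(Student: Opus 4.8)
The plan is to stop working with the $\p\times\p$ matrices $\tS,\S$ and to analyse instead the singular values of the $\p\times\n$ data matrices, where Theorem~\ref{thm:perturbation_multiplicative} and Weyl's inequality for singular values, $\sv{i+j-1}{\A+\B}\le\sv{i}{\A}+\sv{j}{\B}$ (in particular $\lvert\sv{i}{\A+\B}-\sv{i}{\A}\rvert\le\snorm{\B}$), are sharp. For a CLFM $\X-\bM=\bL\bF+\sigma\bPsi$, so $\lambda_i(\tS)=\n^{-1}\sv{i}{\bL\bF+\sigma\bPsi}^2$ (singular values in decreasing order); likewise, with $\mathbf{J}:=\I_\n-\n^{-1}\ones{\n}\ones{\n}^\top$ the centering projection, $\X-\mX=(\bL\bF+\sigma\bPsi)\mathbf{J}$ and $\mathbf{J}=\mathbf{J}^\top=\mathbf{J}^2$ give $\lambda_i(\S)=(\n-1)^{-1}\sv{i}{\bL\bF\mathbf{J}+\sigma\bPsi\mathbf{J}}^2$. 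I will prove the three claims for $\tS$; the same argument applied to $\bF\mathbf{J}$, $\sigma\bPsi\mathbf{J}$ and the normalisation $\n-1$ then yields the claims for $\S$, since $\n^{-1}\bF\mathbf{J}\bF^\top\to\I_K$ a.s., $\rank(\bL\bF\mathbf{J})\le\rank\bL=r$, and $\lambda_k\big((\n-1)^{-1}\bPsi\mathbf{J}\bPsi^\top\big)\to(1+\sqrt{\c})^2$ a.s.\ for each fixed $k$ --- the last because $\bPsi\mathbf{J}\bPsi^\top$ is a rank-one positive semidefinite perturbation of $\bPsi\bPsi^\top$, so Cauchy interlacing passes its fixed-index eigenvalues to those of $\bPsi\bPsi^\top$.

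The inputs from random matrix theory, all available in Section~\ref{sec:useful lemma}, are: (i) $\n^{-1}\bF\bF^\top\to\I_K$ a.s., the strong law with $K$ fixed; (ii) $\n^{-1}\snorm{\bPsi}^2=\lambda_1(\n^{-1}\bPsi\bPsi^\top)\to(1+\sqrt{\c})^2$ a.s., the Bai--Yin theorem under \ref{ass:moment4}; and (iii) $\lambda_k(\n^{-1}\bPsi\bPsi^\top)\to(1+\sqrt{\c})^2$ a.s.\ for every fixed $k$, which follows from (ii) and the convergence of the spectral distribution of $\n^{-1}\bPsi\bPsi^\top$ to the Mar\v{c}enko--Pastur law (no atom at its right edge). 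Put $\mathbf{W}:=\n^{-1}\bF\bF^\top$. Then $\bL\bF\bF^\top\bL^\top=\n\,\bL\mathbf{W}\bL^\top$ and $\lambda_i(\bL\mathbf{W}\bL^\top)=\sv{i}{\bL\mathbf{W}^{1/2}}^2$, and Theorem~\ref{thm:perturbation_multiplicative} applied to the product $\mathbf{W}^{1/2}\bL^\top$ gives $\sv{K}{\mathbf{W}^{1/2}}\sv{i}{\bL}\le\sv{i}{\bL\mathbf{W}^{1/2}}\le\sv{1}{\mathbf{W}^{1/2}}\sv{i}{\bL}$. Since $\sv{1}{\mathbf{W}^{1/2}},\sv{K}{\mathbf{W}^{1/2}}\to1$ by (i), and $\sv{i}{\bL}^2=\lambda_i(\bL\bL^\top)$, this yields $\sv{i}{\bL\bF}^2=\n\,\lambda_i(\bL\bL^\top)(1+o(1))$ for $1\le i\le r$, while $\sv{i}{\bL\bF}=0$ for $i>r$ since $\rank(\bL\bF)\le r$.

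The three claims now drop out of Weyl's inequality. For $1\le k\le r$, $\bigl\lvert\sv{k}{\bL\bF+\sigma\bPsi}-\sv{k}{\bL\bF}\bigr\rvert\le\sigma\snorm{\bPsi}=\sigma\sqrt{\n}\,(1+\sqrt{\c}+o(1))$; dividing by $\sv{k}{\bL\bF}=\sqrt{\n\,\lambda_k(\bL\bL^\top)}\,(1+o(1))$, the error term is $O\bigl(\lambda_k(\bL\bL^\top)^{-1/2}\bigr)=o(1)$ by \ref{ass:L_rank_r_sr_infty}, so $\sv{k}{\bL\bF+\sigma\bPsi}^2=\n\,\lambda_k(\bL\bL^\top)(1+o(1))$ and $\lambda_k(\tS)/\lambda_k(\bL\bL^\top)\to1$ a.s. For $k=r+1$, $\sv{r+1}{\bL\bF}=0$, hence Weyl's inequality in its two forms gives $\sigma\,\sv{2r+1}{\bPsi}\le\sv{r+1}{\bL\bF+\sigma\bPsi}\le\sigma\,\sv{1}{\bPsi}$; dividing by $\sqrt{\n}$ and squaring, both bounds converge to $\sigma^2(1+\sqrt{\c})^2$ by (ii)--(iii), so $\lambda_{r+1}(\tS)\to\sigma^2(1+\sqrt{\c})^2$ a.s. (When $r=0$ the first family of assertions is vacuous and the second is just Bai--Yin for $\sigma^2\n^{-1}\bPsi\bPsi^\top$.)

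The delicate point, and the reason for routing everything through the data matrix, is that \ref{ass:L_rank_r_sr_infty} only forces $\lambda_1(\bL\bL^\top)\ge\dots\ge\lambda_r(\bL\bL^\top)$ to diverge, with no control on their ratios, so an intermediate $\lambda_k(\bL\bL^\top)$ can be far smaller than $\lambda_1(\bL\bL^\top)$. Perturbing $\tS$ directly by the cross-terms $\n^{-1}\sigma(\bL\bF\bPsi^\top+\bPsi\bF^\top\bL^\top)$ costs a spectral-norm error of order $\sqrt{\lambda_1(\bL\bL^\top)}$, which can swamp $\lambda_k(\bL\bL^\top)$ and would give only $\lambda_k(\tS)=O(\sqrt{\lambda_1(\bL\bL^\top)})$. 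At the level of the data matrix the noise enters as an additive perturbation of size $O(\sqrt{\n})$, \emph{uniformly over all signal directions}, which is negligible against the signal singular value $\asymp\sqrt{\n\,\lambda_k(\bL\bL^\top)}$ precisely because $\lambda_k(\bL\bL^\top)\to\infty$; this is the whole mechanism. A secondary point, handled above without any eigenvector-delocalisation input, is the lower bound at index $r+1$, which comes from the rank bound $\rank(\bL\bF)\le r$ together with the index-shifted estimate $\sv{r+1}{\bL\bF+\sigma\bPsi}\ge\sv{2r+1}{\sigma\bPsi}$.
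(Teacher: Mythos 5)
Your proof is correct and follows essentially the same route as the paper's: reduce to singular values of the data matrix, apply the additive perturbation bound $\abs{\sv{k}{\bL\bF+\sigma\bPsi}-\sv{k}{\bL\bF}}\le\sigma \sv{1}{\bPsi}$ together with Theorem~\ref{thm:perturbation_multiplicative}, $\n^{-1}\bF\bF^\top\to\I_K$, and Bai--Yin, with the centered versions handling $\S$. The only departure is your lower bound at index $r+1$ via the index-shifted inequality $\sv{r+1}{\bL\bF+\sigma\bPsi}\ge\sigma\, \sv{2r+1}{\bPsi}$ plus fixed-index convergence for the pure-noise matrix, where the paper instead invokes the Mar\v{c}enko--Pastur limiting spectral distribution of $\tS$ itself (Lemma~\ref{lem:LSD}); both variants rest on the same underlying facts.
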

\begin{proof}[Proof of \protect{\eqref{eq:para_main_CLFM_unbounded}}]
Let $1\le k\le r$. For $\tS$, by the definition \eqref{eq:def_tS} and the model setting, we observe
$$\sqrt{\lambda_k(\tS)}=\sv{k}{\frac{1}{\sqrt{\n}}\X}=\frac{1}{\sqrt{\n}}\sv{k}{\bL\bF+\sigma\bPsi}.$$
By Corollary~\ref{corol:small_size_pert_singular_v},
$$\abs*{\sqrt{\lambda_k(\tS)}-\frac{1}{\sqrt{\n}}\sv{k}{\bL\bF}}\le \frac{\sigma}{\sqrt{\n}}\sv{1}{\bPsi}.$$
By dividing the above inequality with $\sv{k}{\bL}$, 
\begin{align}
&\left|\sqrt{\frac{\lambda_k(\tS)}{\lambda_k(\bL\bL^\top)}}-\frac{1}{\sqrt{\n }}\frac{\sv{k}{\bL\bF}}{\sv{k}{\bL}}\right|
\le \frac{\sigma}{\sv{k}{\bL}}\sv{1}{\frac{\bPsi}{\sqrt{\n}}}
.\label{ineq:diff_sv_ratios}
\end{align}
Here $\lim_{\kr}1/\sv{k}{\bL}=0$ by \ref{ass:L_rank_r_sr_infty}.
For $\bPsi\in\Rset^{\p\times\n}$, $\lim_\kr\sv{1}{\n^{-1/2}\bPsi}=(1+\sqrt{\c})^2$ (a.s.) by
Proposition~\ref{prop:lambda1C}~\eqref{noncentered}. Thus $\lim_{\kr}\eqref{ineq:diff_sv_ratios}=0$ (a.s.).

Now we only need to verify that
\begin{align}
    \frac{1}{\sqrt{\n}}\frac{\sv{k}{\bL\bF}}{\sv{k}{\bL}}&\cas 1&(\kr).\label{eq:lim_LF_over_F_to_1}
\end{align}
By applying the strong law of large numbers to each element of $\n^{-1}\bF\bF^\top\in\Rset^{K\times K}$, we deduce
\begin{align*}
    \frac{1}{\n}\bF\bF^\top\cas \I_K, 
\end{align*}
which means that $\sv{1}{\bF/\sqrt{\n}}\cas 1$, and $\sv{K}{\bF/\sqrt{\n}}\cas
 1$. 
Then, Theorem~\ref{thm:perturbation_multiplicative} verifies 
 \eqref{eq:lim_LF_over_F_to_1}. Therefore, we established $\lim_\kr\fra{\lambda_{k}(\tS)}{\sv{k}{\bL}} = 1 $ (a.s.) for $\rangekr$ and $K\ne 0$.

 \medskip
From the above proof, we obtain a proof of the latter identity of
 \eqref{eq:para_main_CLFM_unbounded} for $\rangekr$ and $K\ne 0$, through the following modification. Firstly
 note that $\mX=(L^2+\sigma^2)^{-1/2}(\bL\mbF +\sigma\mbPsi)$ where $\mbF=\left[\n^{-1}
 \sumj \fkt \right]_{\k,\j}$ and $\mbPsi=\left[\n^{-1} \sumj \eit
 \right]_{i,\j}$. Then, in the proof of $\lim_\kr\fra{\lambda_k(\tS)}{\sv{k}{\bL}} =1$ (a.s.), we use  $\bF - \mbF$, $\bPsi - \mbPsi$, and Proposition~\ref{prop:lambda1C}~\eqref{centered},  
instead of $\bF$, $\bPsi$, and Proposition~\ref{prop:lambda1C}~\eqref{noncentered}.
\end{proof}

\begin{proof}[Proof of \eqref{eq:para_main_CLFM_bounded}] 
By Lemma~\ref{lem:LSD}, we obviously have
 \begin{align*}
   \liminf_\kr\lambda_{r+1}(\tS)&\ge \sigma^2 (1+\sqrt{\c})^2&(a.s.).
\end{align*} 
Thus we only need to assure that
\begin{align}\label{ub_of_limsup_of_r+1th_eig_of_tS}
\limsup_\kr\lambda_{r+1}(\tS)&\le \sigma^2(1+\sqrt{\c})^2 &(a.s.).
\end{align}
When $r=0$, \eqref{ub_of_limsup_of_r+1th_eig_of_tS} follows from Proposition~\ref{prop:lambda1C}~\eqref{noncentered}. So we assume now $r>0$. The matrices $\tS$ can be written as
$$\tS=\frac{1}{\n}(\bL\bF + \sigma\bPsi)(\bL\bF + \sigma\bPsi)^\top.$$
Thus using Proposition~\ref{prop:inequality_Fan_Ky} with $i=0,j=r$, and noting that $s_{r+1}(\bL\bF)=0$, we have
$$\lambda_{r+1}(\tS)=\frac{1}{\n} s^2_{r+1}(\bL\bF + \sigma^2\bPsi)\le \frac{\sigma^2}{\n}\lambda_1(\bPsi\bPsi^\top).$$
As $\kr$, taking the limsup, by Proposition~\ref{prop:lambda1C}, \eqref{noncentered}, we conclude \eqref{ub_of_limsup_of_r+1th_eig_of_tS}.

Finally, the proof of $\lim_\kr \lamLC{\S}=\sigma^2(1+\sqrt{c})^2$ (a.s.) is that of
$\lim_\kr
 \lambda_{r+1}(\tS)=\sigma^2(1+\sqrt{c})^2$ (a.s.), except that  $\S$,
 $\bF$, $\bPsi$, and  Proposition~\ref{prop:lambda1C}~\eqref{noncentered} are superseded by $\tS$,
$\bF-\left[\n^{-1}\sumj\fkt\right]_{k,\j}$, $\bPsi-\left[\n^{-1}\sumj \eit\right]_{i,\j}$, and  Proposition~\ref{prop:lambda1C}~\eqref{centered},
 respectively.
\end{proof}

\begin{proof}[Proof of Theorem~\ref{thm:CLFM_BS}]
It is well-known that $\harmonic{r} =\log\p + O(1)$ for any fixed $r\ge
 1$. By Theorem~\ref{thm:main_CLFM_largest} along with condition
 \eqref{eq:lambda_r_underbound}, it holds almost surely that
$\lamLC{\C}=(1-\rho)(1+\sqrt{\c})^2+o(1)$ and
 $\displaystyle\varliminf_{\p\to\infty}(\lambda_r(\C)/\log(\p))>1$. As a result,
 for sufficiently
large $\p$, it follows almost surely that $\lam{k}{\C}>\harmonic{r}$ $(\rangekr)$,  and
$\lamLC{\C}\le \harmonic{r+1}$.  Thus,
$\lim_\kr\BS(\C)=r$ (a.s.). 

We can demonstrate $\lim_\kr \BS(\tC)=r$ (a.s.) mutatis mutandis. \end{proof}

Now we prove Theorem~\ref{thm:asympt_CLF}. For an asymptotic CLF, we establish the following result for the largest and the second largest eigenvalues of sample correlation matrices $\tC$ and $\C$, and then we proceed in the same manner as proving Theorem~\ref{thm:CLFM_BS}.
\begin{thm} Under the same conditions as Theorem~\ref{thm:asympt_CLF}, for $\M=\C,\tC$, the following hold almost surely:
\begin{align*}
\lim_{\kr}\frac{\lambda_1(\M)}{\p}&=\frac{\norm{\bell}^2}{\norm{\bell}^2+\sigma^2},\quad
&\lambda_2(\M)&=(1-\rho)(1+\sqrt{c})^2+o(\log(\p))&(\kr).
\end{align*}
\end{thm}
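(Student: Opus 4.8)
The plan is to pass, via Theorem~\ref{thm:asymptotic equi-spectral}, to the naturally normalized sample covariance matrices, and then to peel off from the normalized loading matrix a rank-one ``mean-direction'' piece carrying the unique divergent eigenvalue, so that everything that remains (the small deviation of the loading rows from $\bell$, together with the whole noise) perturbs $\lambda_2$ only by $o(\log\p)$. An ACFM satisfies \ref{ass:Gamma_row_ne0}, \ref{ass:moment4} and \ref{ass:karoui_cond_or_identity}(b) (the last because $\bLambda=\diag(\sigma_1,\dots,\sigma_\p)$ is diagonal), so Theorem~\ref{thm:asymptotic equi-spectral} gives, almost surely and uniformly in $i$, $\lambda_i(\C)\sim\lambda_i(\bDelta^{-1/2}\S\bDelta^{-1/2})$ and $\lambda_i(\tC)\sim\lambda_i(\bDelta^{-1/2}\tS\bDelta^{-1/2})$, the implicit factors tending to $1$. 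Hence it suffices to prove, almost surely, $\lambda_1(\bDelta^{-1/2}\tS\bDelta^{-1/2})/\p\to\norm{\bell}^2/(\norm{\bell}^2+\sigma^2)=\rho$ and $\lambda_2(\bDelta^{-1/2}\tS\bDelta^{-1/2})=O(1)$, and likewise with $\S$ in place of $\tS$: then $\lambda_1(\M)/\p\to\rho$, while $\lambda_2(\M)=O(1)=(1-\rho)(1+\sqrt\c)^2+o(\log\p)$, since a bounded quantity is $o(\log\p)$ and $(1-\rho)(1+\sqrt\c)^2$ is a positive constant. Taking $\bmu=\o$ without loss of generality, write $\mathbf{Y}:=\bDelta^{-1/2}(\X-\bM)=\tilde\bL\bF+\tilde\bLambda\bPsi$ with $\tilde\bL:=\bDelta^{-1/2}\bL$ and $\tilde\bLambda:=\bDelta^{-1/2}\bLambda=\diag(\sigma_i/\norm{\bgamma^i})$, so that $\bDelta^{-1/2}\tS\bDelta^{-1/2}=\n^{-1}\mathbf{Y}\mathbf{Y}^\top$.

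The decisive step is the decomposition $\tilde\bL=\mathbf q\,\bell^\top+\mathbf E_1$, where $\mathbf q:=\bigl(\norm{\bgamma^1}^{-1},\dots,\norm{\bgamma^\p}^{-1}\bigr)^\top$ and the $i$th row of $\mathbf E_1$ is $(\bell^i-\bell)^\top/\norm{\bgamma^i}$; this is legitimate because $q_i\bell^\top+(\bell^i-\bell)^\top/\norm{\bgamma^i}=(\bell^i)^\top/\norm{\bgamma^i}$. Since $\sigma_i\to\sigma>0$ we have $\sigma_*:=\inf_i\sigma_i>0$ and $\norm{\bgamma^i}\ge\sigma_i\ge\sigma_*$, so $\snorm{\mathbf E_1}^2\le\sum_{i=1}^\p\norm{\bell^i-\bell}^2/\norm{\bgamma^i}^2\le\sigma_*^{-2}\sum_{i=1}^\p\norm{\bell^i-\bell}^2=o(\log\p)$ by hypothesis; Ces\`{a}ro summation applied to $\norm{\bgamma^i}^2=\norm{\bell^i}^2+\sigma_i^2\to\norm{\bell}^2+\sigma^2$ gives $\norm{\mathbf q}^2/\p\to(\norm{\bell}^2+\sigma^2)^{-1}$; and $\snorm{\tilde\bLambda}=\max_i\sigma_i/\norm{\bgamma^i}\le1$. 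Hence $\mathbf{Y}=\mathbf q\,\mathbf v^\top+\mathbf E'$ with $\mathbf v:=\bF^\top\bell$ and $\mathbf E':=\mathbf E_1\bF+\tilde\bLambda\bPsi$. The entrywise strong law of large numbers yields $\n^{-1}\bF\bF^\top\cas\I_K$, whence $\n^{-1}\norm{\mathbf v}^2=\bell^\top(\n^{-1}\bF\bF^\top)\bell\cas\norm{\bell}^2$ and $\n^{-1/2}\sv1\bF\cas1$, while $\n^{-1/2}\sv1\bPsi\cas1+\sqrt\c$ by Proposition~\ref{prop:lambda1C}~\eqref{noncentered}; therefore $\n^{-1/2}\sv1{\mathbf E'}\le\snorm{\mathbf E_1}\,\n^{-1/2}\sv1\bF+\snorm{\tilde\bLambda}\,\n^{-1/2}\sv1\bPsi$ is almost surely $o(\sqrt{\log\p})+O(1)=o(\sqrt{\log\p})$.

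For $\lambda_1$, Corollary~\ref{corol:small_size_pert_singular_v} with $k=1$ gives $\bigl|\sv1{\mathbf{Y}}-\norm{\mathbf q}\,\norm{\mathbf v}\bigr|\le\sv1{\mathbf E'}$; dividing by $\sqrt{\p\n}$, the middle term tends almost surely to $(\norm{\bell}^2+\sigma^2)^{-1/2}\norm{\bell}=\sqrt\rho$ and the error to $0$, so $\lambda_1(\n^{-1}\mathbf{Y}\mathbf{Y}^\top)/\p=\sv1{\mathbf{Y}}^2/(\p\n)\cas\rho$. For $\lambda_2$, Proposition~\ref{prop:inequality_Fan_Ky} together with $\sv2{\mathbf q\,\mathbf v^\top}=0$ gives $\sv2{\mathbf{Y}}\le\sv1{\mathbf E'}$, so $\lambda_2(\n^{-1}\mathbf{Y}\mathbf{Y}^\top)=\sv2{\mathbf{Y}}^2/\n\le\sv1{\mathbf E'}^2/\n=o(\log\p)$, in particular $O(1)$. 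The data-centered case, with $\S$ instead of $\tS$, is identical after replacing $\bF,\bPsi$ by their data-centered versions $\bF-\mbF,\bPsi-\mbPsi$ and Proposition~\ref{prop:lambda1C}~\eqref{noncentered} by \eqref{centered}, exactly as in the proofs of Theorems~\ref{thm:main_CLFM_largest} and \ref{thm:CLFM_BS}: centering preserves $\n^{-1}(\bF-\mbF)(\bF-\mbF)^\top\cas\I_K$, the $\bell$-direction part $\mathbf q\,\bigl((\bF-\mbF)^\top\bell\bigr)^\top$ stays rank one, and subtracting the sample mean over the $\n$ columns multiplies $\bF$ and $\bPsi$ by a norm-one projection, so all the operator-norm estimates above are preserved. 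Combined with the first paragraph, this proves the theorem for $\M=\C$ and $\M=\tC$.

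The main obstacle is the bookkeeping of orders of magnitude. The only ingredient that decays faster than $\sqrt{\log\p}$ is $\snorm{\mathbf E_1}$, and this is exactly where the hypothesis $\sum_k\norm{\bell^k-\bell}^2=o(\log\p)$ is used; the noise term $\tilde\bLambda\bPsi$ contributes an honestly $O(1)$ (not $o(1)$) operator norm after normalization, and for $\S$ the mean corrections contribute $O(\sqrt\n)$ operator norm. The argument succeeds only because the rank-one structure of the $\bell$-direction funnels all such $O(1)$ contributions into $\lambda_1$, leaving $\lambda_2\le\n^{-1}\sv1{\mathbf E_1\bF+\tilde\bLambda\bPsi}^2=O(1)$. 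Note that the sharper assertion $\lambda_2(\M)\to(1-\rho)(1+\sqrt\c)^2$ is \emph{not} claimed, and would fail in general without a rate for $\sigma_i\to\sigma$ (finitely many ``early'' large $\sigma_i$ can produce a pre-asymptotic spike in $\bLambda$); the slack $o(\log\p)$ is precisely what the hypothesis supplies and all that the broken-stick step in the proof of Theorem~\ref{thm:asympt_CLF} requires. A matching lower bound $\varliminf_{\kr}\lambda_2(\M)\ge(1-\rho)(1+\sqrt\c)^2$ does follow from Lemma~\ref{lem:LSD} and the limiting spectral distribution of $\bDelta^{-1/2}\tS\bDelta^{-1/2}$, but is not needed here.
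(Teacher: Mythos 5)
Your proof is correct and follows essentially the same route as the paper: pass to $\bDelta^{-1/2}\tS\bDelta^{-1/2}$ via Theorem~\ref{thm:asymptotic equi-spectral}, approximate the normalized loading matrix by the rank-one matrix whose rows are all proportional to $\bell$ (the paper's $\bL_0$, your $\mathbf{q}\,\bell^\top$ --- your explicit $\mathbf{q}$ is in fact slightly more careful about the row normalization than the paper's ``WLOG unit rows''), bound the discrepancy in operator norm by $o(\sqrt{\log\p})$ using the hypothesis, and apply Corollary~\ref{corol:small_size_pert_singular_v} and Proposition~\ref{prop:inequality_Fan_Ky} to locate $s_1$ and annihilate $s_2$. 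The only blemish is your repeated side remark that $\lambda_2=O(1)$, which does not follow from the bound $\sv{1}{\mathbf{E}'}^2/\n=o(\log\p)$ that you actually establish (the $\mathbf{E}_1\bF$ term is only $o(\sqrt{\log\p}\,)\cdot\sqrt{\n}$ in operator norm, not $O(\sqrt{\n})$); since the theorem claims only $o(\log\p)$, exactly as the paper proves, this is harmless.
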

\begin{proof} After normalizing the $k$th row of $\X$ with $\sqrt{\norm{\bell^k}^2+\sigma_k^2}$, we may assume without loss of generality that $\norm{\bell^k}^2+\sigma_k^2=1$ for each $k\in [1,\p]$. Then $\norm{\bell}^2+\sigma^2=1$. 
For
$$\bL_0=\begin{bmatrix}
    \bell^\top & \dots & \bell^\top 
\end{bmatrix}^\top,$$
Corollary~\ref{corol:small_size_pert_singular_v} implies
\begin{align}
    \abs*{s_i(\bL)-s_i(\bL_0)}\le \snorm{\bL-\bL_0} \le \sqrt{\sum_{k=1}^N \norm{\bell^k-\bell}^2}=o(\log^{1/2}(N)). \label{eq:si_bL_bL0}
\end{align}

Using Theorem~\ref{thm:asymptotic equi-spectral}, Corollary~\ref{corol:small_size_pert_singular_v} and Theorem~\ref{thm:perturbation_multiplicative}, we have, as $\kr,\krr>0$, almost surely,
\begin{align*}\sqrt{\frac{\lambda_1(\tC)}{\p}}
 & \sim \frac{1}{\sqrt{TN}}s_1(\bL\bF+\bLambda \bPsi)  = \frac{1}{\sqrt{TN}}s_1(\bL\bF)+ O(\frac{1}{\sqrt{TN}}s_1(\bLambda \bPsi)) \\
 & \sim \frac{1}{\sqrt{N}}s_1(\bL) + O(\frac{1}{\sqrt{N}}) 
 \sim  \frac{1}{\sqrt{N}}s_1(\bL_0) + O(\frac{1}{\sqrt{N}}) \xrightarrow[]{} \norm{\bell}.
\end{align*}

As a result of $s_2(\bL_0)=0$ and \eqref{eq:si_bL_bL0}, we get $s_2(\bL)=o(\log^{1/2} N)$. The same sequence of inequalities applying to the second largest eigenvalue of $\tC$, we have
\begin{align*}\sqrt{\lambda_2(\tC)}
 & \sim \frac{1}{\sqrt{T}}s_2(\bL\bF+\bLambda \bPsi)  = \frac{1}{\sqrt{T}}s_2(\bL\bF)+ O(\frac{1}{\sqrt{T}}s_2(\bLambda \bPsi)) \\
 & \sim s_2(\bL) + O(1)  = o(\log^{1/2} N).
\end{align*}
    For $\C$, the proof is similar.
\end{proof}
\section{Some useful lemmas}\label{sec:useful lemma}

The following Weyl's inequality is well-known~\cite[Theorem~4.3.1]{horn2013}.
\begin{prop}[Weyl's inequality]\label{lem:Weyl's ineq} Let 
 $\bN$ and $\H$ be $\p\times\p$ Hermitian matrices, and let $\M=\bN+\H$. Then
\begin{align*}&\lambda_j(\bN)+\lambda_k(\bH) \le \lambda_i(\bM) \le \lambda_r(\bN)+\lambda_s(\bH) & (r+s-1\le i\le j + k -\p).
\end{align*}
\end{prop}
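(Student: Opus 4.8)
The plan is to obtain both one-sided bounds from the Courant–Fischer min–max characterization of eigenvalues of Hermitian matrices~\cite[Theorem~4.2.6]{horn2013}: for a $\p\times\p$ Hermitian matrix $\A$,
$$\lambda_i(\A)=\min_{\dim V=\p-i+1}\ \max_{\x\in V\setminus\{\o\}}\frac{\x^*\A\x}{\x^*\x}=\max_{\dim V=i}\ \min_{\x\in V\setminus\{\o\}}\frac{\x^*\A\x}{\x^*\x},$$
the extrema running over linear subspaces $V$ of the indicated dimension. I would start by fixing orthonormal eigenbases of $\bN$ and of $\bH$, ordered so that the corresponding eigenvalues decrease; then the Rayleigh-quotient bounds used below follow at once from the spectral decomposition.

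For the upper bound $\lambda_i(\bM)\le\lambda_r(\bN)+\lambda_s(\bH)$ in the range $i\ge r+s-1$, I would let $V_1$ be the span of the eigenvectors of $\bN$ attached to $\lambda_r(\bN),\dots,\lambda_\p(\bN)$ and $V_2$ the span of the eigenvectors of $\bH$ attached to $\lambda_s(\bH),\dots,\lambda_\p(\bH)$, so that $\dim V_1=\p-r+1$, $\dim V_2=\p-s+1$, with $\x^*\bN\x\le\lambda_r(\bN)\,\x^*\x$ on $V_1$ and $\x^*\bH\x\le\lambda_s(\bH)\,\x^*\x$ on $V_2$. Since $\dim(V_1\cap V_2)\ge\dim V_1+\dim V_2-\p=\p-(r+s-1)+1\ge\p-i+1$, the intersection contains a subspace $W$ with $\dim W=\p-i+1$, on which
$$\x^*\bM\x=\x^*\bN\x+\x^*\bH\x\le\bigl(\lambda_r(\bN)+\lambda_s(\bH)\bigr)\x^*\x;$$
using $W$ as a competitor in the min–max formula for $\lambda_i(\bM)$ gives the inequality. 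For the lower bound $\lambda_j(\bN)+\lambda_k(\bH)\le\lambda_i(\bM)$ in the range $i\le j+k-\p$, I would argue dually with the max–min formula, taking $V_1$ to be the span of the top $j$ eigenvectors of $\bN$ and $V_2$ the span of the top $k$ eigenvectors of $\bH$; then $\dim(V_1\cap V_2)\ge j+k-\p\ge i$, and on any $i$-dimensional $W\subseteq V_1\cap V_2$ one has $\x^*\bM\x\ge(\lambda_j(\bN)+\lambda_k(\bH))\,\x^*\x$, which fed into the max–min formula yields the claim. Alternatively the second inequality is the first applied to $-\bM=(-\bN)+(-\bH)$, using $\lambda_i(-\A)=-\lambda_{\p+1-i}(\A)$.

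Since the statement is classical, there is no genuine obstacle; the only thing requiring attention is the index bookkeeping — verifying that the two intersections have the stated dimensions precisely under $i\ge r+s-1$ and $i\le j+k-\p$, and noting that these ranges overlap (so that the two-sided inequality is non-vacuous) exactly when $r+s-1\le j+k-\p$. In the write-up it would also be legitimate simply to invoke~\cite[Theorem~4.3.1]{horn2013}.
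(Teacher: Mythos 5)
Your proof is correct: the Courant--Fischer dimension counts are right (the intersection $V_1\cap V_2$ has dimension at least $\p-i+1$ precisely when $i\ge r+s-1$ for the upper bound, and at least $i$ precisely when $i\le j+k-\p$ for the lower bound), and the dual derivation of the lower bound via $-\M=(-\bN)+(-\H)$ is also sound. The paper supplies no proof of this proposition at all --- it simply cites \cite[Theorem~4.3.1]{horn2013} --- so your argument is the standard proof of the very result the paper invokes, and your closing remark that one may legitimately just cite that theorem is exactly what the paper does.
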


An analogous theorem on the singular values of matrices can be established.
\begin{prop}[\protect{\cite[Theorem~A.8]{bai2010spectral}}]\label{prop:inequality_Fan_Ky}
Let $\A$ and $\B$ be two $p\times n$ complex matrices. Then for any nonnegative integers $i,j$,
\begin{align*} \sv{i+j+1}{\A+\B}\le\sv{i+1}{\A}+\sv{j+1}{\B}. 
\end{align*}
\end{prop}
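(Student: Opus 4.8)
Since what is asked for is precisely a singular-value analogue of Weyl's inequality, the plan is to derive it directly from Proposition~\ref{lem:Weyl's ineq} via the Hermitian dilation (Jordan--Wielandt) construction. For a complex $p\times n$ matrix $\M$, set
\[
\widetilde{\M}=\begin{bmatrix}\mathbf{0} & \M\\ \M^{*} & \mathbf{0}\end{bmatrix}\in\mathbb{C}^{(p+n)\times(p+n)},
\]
which is Hermitian. First I would recall the standard spectral description: with $q=\min(p,n)$ and the convention $\sv{k}{\M}=0$ for $k>q$, the eigenvalues of $\widetilde{\M}$ in decreasing order are $\sv{1}{\M}\ge\cdots\ge\sv{q}{\M}$, then $\max(p,n)-q$ zeros, then $-\sv{q}{\M}\ge\cdots\ge-\sv{1}{\M}$. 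In particular $\lambda_{k}(\widetilde{\M})\le\sv{k}{\M}$ for every $k$, with equality whenever $k\le\max(p,n)$. This correspondence is exactly the reason a singular-value version of Weyl's inequality should exist.

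Next, dilation is linear, so $\widetilde{\A+\B}=\widetilde{\A}+\widetilde{\B}$. Applying the upper bound in Proposition~\ref{lem:Weyl's ineq} to the $(p+n)\times(p+n)$ Hermitian matrices $\bN=\widetilde{\A}$, $\bH=\widetilde{\B}$, $\bM=\widetilde{\A+\B}$, with the choice $r=i+1$, $s=j+1$ and target index $i+j+1$ — the admissibility condition $r+s-1\le i+j+1$ holding with equality — yields
\[
\lambda_{i+j+1}\bigl(\widetilde{\A+\B}\bigr)\le\lambda_{i+1}(\widetilde{\A})+\lambda_{j+1}(\widetilde{\B}).
\]
I would then translate back: $\lambda_{i+1}(\widetilde{\A})\le\sv{i+1}{\A}$ and $\lambda_{j+1}(\widetilde{\B})\le\sv{j+1}{\B}$ hold always, while $\lambda_{i+j+1}(\widetilde{\A+\B})=\sv{i+j+1}{\A+\B}$ as soon as $i+j+1\le\max(p,n)$; this gives the claim in that range. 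When $i+j+1>\max(p,n)$ one has $i+j+1>q$, so $\sv{i+j+1}{\A+\B}=0$ by convention and the inequality is trivial.

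The only delicate point — more bookkeeping than obstacle — is tracking the index ranges so that the identity $\lambda_k(\widetilde{\M})=\sv{k}{\M}$ is invoked only where it is valid, i.e. handling the degenerate subscripts ($i+1$, $j+1$, or $i+j+1$ exceeding $q$, where the relevant singular values vanish) via the $s_k:=0$ convention. For completeness I would note an alternative that avoids dilations altogether: using the Eckart--Young characterisation $\sv{k+1}{\M}=\min\{\snorm{\M-\M_0}:\rank\M_0\le k\}$, pick $\A_0$ of rank $\le i$ and $\B_0$ of rank $\le j$ attaining $\snorm{\A-\A_0}=\sv{i+1}{\A}$ and $\snorm{\B-\B_0}=\sv{j+1}{\B}$; then $\rank(\A_0+\B_0)\le i+j$, whence
\[
\sv{i+j+1}{\A+\B}\le\snorm{(\A+\B)-(\A_0+\B_0)}\le\snorm{\A-\A_0}+\snorm{\B-\B_0}=\sv{i+1}{\A}+\sv{j+1}{\B}.
\]
I would present the dilation argument as the main proof, since Weyl's inequality is already available in Section~\ref{sec:useful lemma}.
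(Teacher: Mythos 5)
Your proposal is correct and follows essentially the same route as the paper, which likewise proves the inequality by applying the upper bound of Weyl's inequality (Proposition~\ref{lem:Weyl's ineq}) to the Hermitian dilations of $\A$ and $\B$ with indices $r=i+1$, $s=j+1$. Your version merely adds the (sound) bookkeeping about when $\lambda_k$ of the dilation equals $\sv{k}{\cdot}$, plus an optional Eckart--Young alternative that the paper does not use.
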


The conjugated transpose of a complex matrix $\A$ is denoted by $\A^*$.
\begin{proof}
 Apply the second inequality $\lambda_i(\bM) \le \lambda_r(\bN)+\lambda_s(\bH)$ of Proposition~\ref{lem:Weyl's ineq} to Hermitian matrices $\bN=\begin{bmatrix} & \A \\ \A^* &	\end{bmatrix}$ and  $\H=\begin{bmatrix} & \B\\ \B^* & \end{bmatrix}$. 
\end{proof}
Using this proposition, we establish the small-size perturbation
inequality for singular values.
\begin{corol}%[\protect{\cite[Corollary~7.3.5~(a)]{horn2013}}]
\label{corol:small_size_pert_singular_v}
Let $\A$ and $\B$ be the same as in Proposition~\ref{prop:inequality_Fan_Ky}. Then for $i\ge 1$,
    \begin{align*} |\sv{i}{\A+\B}-\sv{i}{\A}|\le \sv{1}{\B}.
\end{align*}
\end{corol}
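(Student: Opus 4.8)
The plan is to deduce this directly from Proposition~\ref{prop:inequality_Fan_Ky} by specializing one of the two index parameters to zero and then exploiting the symmetry of the roles of $\A$ and $\A+\B$.

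First I would apply Proposition~\ref{prop:inequality_Fan_Ky} with the second nonnegative integer set to $0$. This yields $\sv{i+1}{\A+\B}\le\sv{i+1}{\A}+\sv{1}{\B}$ for every $i\ge0$; reindexing by $k=i+1$, we get $\sv{k}{\A+\B}\le\sv{k}{\A}+\sv{1}{\B}$ for all $k\ge1$, hence $\sv{k}{\A+\B}-\sv{k}{\A}\le\sv{1}{\B}$.

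For the reverse bound I would write $\A=(\A+\B)+(-\B)$ and run the same inequality with $\A+\B$ in place of $\A$ and $-\B$ in place of $\B$, giving $\sv{k}{\A}\le\sv{k}{\A+\B}+\sv{1}{-\B}$. Since the singular values of $-\B$ coincide with those of $\B$ (the matrices $(-\B)(-\B)^*=\B\B^*$ share the same eigenvalues), we have $\sv{1}{-\B}=\sv{1}{\B}$, so $\sv{k}{\A}-\sv{k}{\A+\B}\le\sv{1}{\B}$. Combining the two one-sided estimates gives $\abs{\sv{k}{\A+\B}-\sv{k}{\A}}\le\sv{1}{\B}$ for all $k\ge1$, which is the claim.

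There is essentially no obstacle here: the only points requiring any care are the correct reindexing from the $i+j+1$ formulation of Proposition~\ref{prop:inequality_Fan_Ky} to a statement indexed by a single integer $k\ge1$, and the elementary observation that passing from $\B$ to $-\B$ does not change the largest singular value.
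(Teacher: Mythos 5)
Your proof is correct and follows essentially the same route as the paper: apply Proposition~\ref{prop:inequality_Fan_Ky} with $j=0$ and reindex, then swap the roles via $\A=(\A+\B)+(-\B)$ and use $\sv{1}{-\B}=\sv{1}{\B}$. No issues.
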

\begin{proof}
In Proposition~\ref{prop:inequality_Fan_Ky} taking $j=0$ and replacing $i+1$ by $i$, we get 
$\sv{i}{\A+\B}-\sv{i}{\A}\le\sv{1}{\B}.$
The other part is checked from the above inequality by
$\sv{i}{\A+\B-\B}-\sv{i}{\A+\B}\le\sv{1}{-\B}$ and note that $\sv{1}{-\B}=\sv{1}{\B}$.
\end{proof}

\begin{prop}[\protect{\cite[Theorem~A.10]{bai2010spectral}}]\label{prop:ineq_Fan_multiplication}Let $\A$ and $\B$ be complex matrices of order $m\times n$ and $n\times p$. For any $i, j \ge 0$, we have
$$\sv{i+j+1}{\A\B}\le \sv{i+1}{\A}\sv{j+1}{\B}$$
\end{prop}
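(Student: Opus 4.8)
The plan is to obtain this multiplicative Fan-type inequality from three standard facts: the Eckart--Young--Mirsky characterization of singular values, submultiplicativity of the operator norm $\snorm{\cdot}$, and subadditivity of matrix rank. First I would record the one input that is not completely routine: for any matrix $\M$ and any integer $k\ge 0$,
$$\sv{k+1}{\M}=\min\bigl\{\snorm{\M-\bR}\colon \rank\bR\le k\bigr\},$$
so that in particular $\snorm{\M-\bR}\ge\sv{k+1}{\M}$ whenever $\rank\bR\le k$. This follows from the Courant--Fischer minimax formula for singular values: $\ker\bR$ has codimension $\rank\bR\le k$ in the domain of $\M$, and $(\M-\bR)x=\M x$ there, so $\snorm{\M-\bR}\ge\sup_{x\in\ker\bR,\,\norm{x}=1}\norm{\M x}\ge\sv{k+1}{\M}$.

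Next I would fix best low-rank approximants. Let $\bR_\A$ be the truncation of an SVD of $\A$ to its $i$ largest singular values, so $\rank\bR_\A\le i$ and $\snorm{\A-\bR_\A}=\sv{i+1}{\A}$; similarly let $\bR_\B$ satisfy $\rank\bR_\B\le j$ and $\snorm{\B-\bR_\B}=\sv{j+1}{\B}$. The key algebraic step is the decomposition
$$\A\B=\bR_\A\B+(\A-\bR_\A)\bR_\B+(\A-\bR_\A)(\B-\bR_\B).$$
Here $\bR_\A\B$ has rank at most $\rank\bR_\A\le i$ and $(\A-\bR_\A)\bR_\B$ has rank at most $\rank\bR_\B\le j$, so $\bR:=\bR_\A\B+(\A-\bR_\A)\bR_\B$ has rank at most $i+j$, while $\A\B-\bR=(\A-\bR_\A)(\B-\bR_\B)$. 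Applying the displayed bound with $\M=\A\B$ and $k=i+j$, followed by submultiplicativity of $\snorm{\cdot}$, gives
$$\sv{i+j+1}{\A\B}\le\snorm{\A\B-\bR}=\snorm{(\A-\bR_\A)(\B-\bR_\B)}\le\snorm{\A-\bR_\A}\,\snorm{\B-\bR_\B}=\sv{i+1}{\A}\,\sv{j+1}{\B},$$
which is the assertion.

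I do not expect a real obstacle here: once the Eckart--Young characterization is available, the remainder is just rank bookkeeping and a single use of submultiplicativity, and the degenerate cases ($i\ge\rank\A$ or $j\ge\rank\B$, where the relevant singular value vanishes and $\rank\A\B$ is already at most $i+j$) cause no difficulty. The only point needing care is invoking — or briefly proving — the easy half $\snorm{\M-\bR}\ge\sv{k+1}{\M}$ for $\rank\bR\le k$ via minimax. If one preferred to avoid any approximation-theoretic input, an alternative is to pass to $(i+j+1)$st compound (exterior-power) matrices, using $\wedge^{k}(\A\B)=(\wedge^{k}\A)(\wedge^{k}\B)$ together with $\snorm{\wedge^{k}\M}=\prod_{t=1}^{k}\sv{t}{\M}$ and the $i=j=0$ submultiplicativity; but this is heavier machinery than the problem warrants, and I would keep it only as a fallback.
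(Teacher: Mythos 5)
Your argument is correct, but there is nothing in the paper to compare it against: the paper states this proposition as an imported result, citing \cite[Theorem~A.10]{bai2010spectral}, and supplies no proof of its own (the short proof that appears nearby in the supplement belongs to the \emph{additive} inequality $\sv{i+j+1}{\A+\B}\le\sv{i+1}{\A}+\sv{j+1}{\B}$, which is reduced to Weyl's inequality via the Hermitian dilation $\begin{bmatrix} & \A\\ \A^* & \end{bmatrix}$). Your proof is a clean, self-contained derivation: the half of Eckart--Young--Mirsky you need, namely $\snorm{\M-\bR}\ge\sv{k+1}{\M}$ for $\rank\bR\le k$, is correctly justified by restricting to $\ker\bR$ (which has codimension at most $k$) and invoking the minimax characterization; the decomposition $\A\B=\bR_\A\B+(\A-\bR_\A)\bR_\B+(\A-\bR_\A)(\B-\bR_\B)$ telescopes correctly; the rank bookkeeping gives $\rank\bigl(\bR_\A\B+(\A-\bR_\A)\bR_\B\bigr)\le i+j$; and one application of submultiplicativity finishes the job. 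The degenerate cases ($i\ge\rank\A$ or $j\ge\rank\B$) are handled automatically since then $\A-\bR_\A=\o$ or $\B-\bR_\B=\o$. Your fallback via compound matrices is also valid but, as you say, unnecessary. If the authors wished to make the supplement self-contained in the same spirit as their proof of the additive version, your argument would be a natural choice; note, though, that it uses the full minimax formula for $\sv{k+1}{\M}$ rather than only Weyl's inequality, so it is not literally the same reduction the paper uses for the additive case.
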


To argue forms of the law of large numbers in the proportional limiting
regime, we make use of the following:
\begin{lem}[\protect{\cite[Lemma~2]{bai-yin2}}]\label{lem:bai-yin2}
Suppose that $\set{y_{ij} \colon\, i,j\ge1}$
is a double array of \emph{i.\@i.\@d.\@} random variables. Let $\alpha>{1}/{2}$, $\beta\ge 0$ and $Q>0$ be constants. Then,
\begin{align*}
& \lim_{n\to\infty}\max_{i\le Q n^\beta}\abs*{\sum_{j=1}^n \frac{y_{ij}-m}{n^\alpha}}= 0\quad (a.s.)\\
\iff&
\Exp\abs{y_{11}}^{\frac{1+\beta}{\alpha}}< \infty\; \&
\;  m=\begin{cases}\displaystyle
\Exp y_{11},&(\alpha\le 1),\\
\displaystyle\text{\rm any},&(\alpha>1).
\end{cases}
\end{align*}
\end{lem}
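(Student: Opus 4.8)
The statement is an equivalence, so the plan is to prove necessity and sufficiency separately, writing $p:=(1+\beta)/\alpha$ and $S_n^{(i)}:=\sum_{j=1}^n(y_{ij}-m)$. When $\beta=0$ the maximum runs over the fixed set $\{i\le Q\}$, so the assertion is an almost-sure finite union of instances of the one-sequence Marcinkiewicz--Zygmund strong law (with normalization $n^\alpha$); the hypothesis $\alpha>1/2$ is exactly what makes this --- hence the whole equivalence --- nonvacuous, since $n^{-1/2}S_n^{(1)}\not\to0$ for non-degenerate $y_{11}$. Below I assume $\beta>0$.

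\emph{Necessity.} Suppose $\max_{i\le Qn^\beta}|S_n^{(i)}|/n^\alpha\to0$ almost surely. Taking $i=1$ gives $S_n^{(1)}/n^\alpha\to0$, which by the one-sequence Marcinkiewicz--Zygmund law already forces $\Exp|y_{11}|^{1/\alpha}<\infty$, and, when $\alpha\le1$, forces $m=\Exp y_{11}$ (else $n^{-1}S_n^{(1)}\to\Exp y_{11}-m\ne0$). To raise the moment exponent from $1/\alpha$ to $p$, I would pass to the last increments: since $|y_{in}-m|=|S_n^{(i)}-S_{n-1}^{(i)}|\le|S_n^{(i)}|+|S_{n-1}^{(i)}|$, the hypothesis gives $\max_{i\le Q(n-1)^\beta}|y_{in}-m|/n^\alpha\to0$ almost surely, so the events $B_n:=\{\,\exists\,i\le Q(n-1)^\beta:\; |y_{in}-m|>\varepsilon n^\alpha\,\}$ occur only finitely often. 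These events involve pairwise disjoint columns, hence are independent, and $\P(B_n)\gtrsim\min\{1,\,Qn^\beta\P(|y_{11}-m|>\varepsilon n^\alpha)\}$; by the second Borel--Cantelli lemma $\sum_n n^\beta\P(|y_{11}|>\varepsilon n^\alpha)<\infty$, which is exactly $\Exp|y_{11}|^{p}<\infty$ (substitute $W=|y_{11}|^{1/\alpha}$, so the sum is comparable to $\Exp W^{1+\beta}$).

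\emph{Sufficiency.} Assume $\Exp|y_{11}|^{p}<\infty$ and, if $\alpha\le1$, $m=\Exp y_{11}$; when $\alpha>1$ the deterministic term $-mn^{1-\alpha}\to0$, so after recentering one may treat the centering constant as $0$. Fix $\varepsilon>0$, put $\tau_{ij}:=\max\{j,(i/Q)^{1/\beta}\}$ (the first $n$ at which $(i,j)$ enters the maximum; note $\tau_{ij}\le n$ for entries present at time $n$), and truncate $\bar y_{ij}:=y_{ij}\mathbf1(|y_{ij}|\le\delta\tau_{ij}^\alpha)$, with $\delta=\delta(\varepsilon)$ a small constant. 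The argument splits into three pieces. (a) Summing $\P(|y_{ij}|>\delta\tau_{ij}^\alpha)$ over $j$ (fixed $i$) and then over $i$ produces two series, each comparable to $\delta^{-p}\Exp|y_{11}|^{p}<\infty$; so by Borel--Cantelli only finitely many entries of the whole array are truncated, and $S_n^{(i)}-\sum_{j\le n}\bar y_{ij}=O(1)=o(n^\alpha)$ for every row --- this is the one step that consumes the full exponent $p=(1+\beta)/\alpha$. (b) The centering bias $\sum_{j\le n}\Exp\bar y_{ij}$ is $o(n^\alpha)$ uniformly in $i\le Qn^\beta$, by the elementary tail bounds $\Exp[\,|y|\,\mathbf1(|y|>s)]=o(s^{1-p})$ (and $\Exp[\,|y|\,\mathbf1(|y|\le s)]=o(s^{1-p})$ when $p<1$). (c) The remaining centered row-sum $\sum_{j\le n}(\bar y_{ij}-\Exp\bar y_{ij})$ is a sum of independent mean-zero variables bounded by $2\delta n^\alpha$ with total variance $\ll n^{2\alpha}$ (of order $n^{2\alpha-\beta}$ when $p<2$); Bennett's (Fuk--Nagaev's) inequality, in its large-deviation form, then yields $\P(|\sum_{j\le n}(\bar y_{ij}-\Exp\bar y_{ij})|>\varepsilon n^\alpha/2)\le n^{-c_{\varepsilon,\delta}}$ with $c_{\varepsilon,\delta}\asymp\varepsilon/\delta$ unboundedly large as $\delta\downarrow0$. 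A union bound over the $Qn^\beta$ i.i.d.\ rows along a dyadic subsequence of $n$ (with a routine maximal-inequality argument between consecutive powers of two), combined with (a) and (b), gives $\max_{i\le Qn^\beta}|S_n^{(i)}|/n^\alpha\le\varepsilon$ eventually almost surely; intersecting over $\varepsilon\downarrow0$ finishes.

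\emph{Main obstacle.} The delicate point is step (c). One cannot truncate below the scale $\asymp n^\alpha$ --- a lower cutoff would need a strictly higher moment to absorb the discarded mass in (a) --- so a single surviving increment is of the same order $n^\alpha$ as the deviation $\varepsilon n^\alpha$ one must rule out; consequently Hoeffding's and even Bernstein's inequalities saturate at a constant, and a naive union bound over the $\sim n^\beta$ rows fails. What rescues the argument is the logarithmic factor in Bennett's/Fuk--Nagaev's large-deviation (Poisson-regime) estimate: it upgrades that constant to $c\log n$, i.e.\ a polynomial row-tail $n^{-c}$ with $c\asymp\varepsilon/\delta$, which one makes beat the $n^\beta$-fold union bound (and stay dyadic-summable) by taking $\delta\lesssim\varepsilon$. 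Bookkeeping the trade-off of $\delta$ against $\varepsilon$ and the uniform-in-$i$ bias estimate is what pins the critical moment at $(1+\beta)/\alpha$ rather than the $1/\alpha$ of the single-sequence law.
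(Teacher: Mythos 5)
The paper does not prove this lemma at all: it is imported verbatim as Lemma~2 of \cite{bai-yin2} and used as a black box, so there is no in-paper proof to compare against. Your argument is, in outline, exactly the classical Bai--Yin proof of that lemma, and it is sound. The necessity half (single-row Marcinkiewicz--Zygmund law for the value of $m$ and the $1/\alpha$ moment, then the second Borel--Cantelli lemma applied to the column-disjoint, hence independent, last-increment events $B_n$ to upgrade the moment to $(1+\beta)/\alpha$ via $\sum_n n^\beta \P(|y_{11}|>\varepsilon n^\alpha)<\infty$) is complete. The sufficiency half has the right three-way decomposition, and you correctly isolate the one genuinely delicate point: with truncation at scale $\delta\tau_{ij}^\alpha$ (which cannot be lowered without demanding more moments in step (a)), Hoeffding/Bernstein saturate and one needs the Poisson-regime logarithm in Bennett's bound, giving a row-tail $\approx n^{-c\,\varepsilon/\delta}$ with $c\asymp\min\{\beta,\,2\alpha-1\}>0$, which beats the $n^\beta$-fold union bound once $\delta$ is chosen small relative to $\varepsilon$. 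Two small bookkeeping points you should make explicit in a full write-up: the truncated variables $\bar y_{ij}$ in a fixed row are independent but not identically distributed (the cutoff depends on $j$), which Bennett's inequality tolerates; and in fact choosing $\delta$ small enough that the row-tail exponent exceeds $\beta+1$ makes $\sum_n n^\beta\cdot n^{-c\varepsilon/\delta}$ converge directly, so the dyadic-blocking/maximal-inequality step can be dispensed with entirely. Neither point is a gap.
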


For all square matrices $\A$ and $\B$ of order $n$, the characteristic
polynomial of $\A\B$ is that of $\B\A$.  For two matrices $\A$ and $\B$
of size $n \times m$ and $m \times n$, respectively, the matrix $\A\B$
has the same nonzero eigenvalues as $\B\A$.

In the following, the first assertion is due to
\cite[Theorem~3.1]{YBK88} and the second to
\cite[(2.7)]{10.2307/25053330}.
\begin{prop} \label{prop:lambda1C} Let $\Z=\left[ z_{i\j}\right]_{i,\j}\in\Rset^{(K+\p)\times\n}$ satisfy the following subarray condition:
There is an infinite  matrix $\mathcal{Z}=[\zit]_{i\ge 1,\j\ge 1}$ such that
	\begin{itemize}
	 \item 
all the entries of $\mathcal{Z}$ are centered \emph{i.\@i.\@d.\@} real random variables having unit variances and the finite fourth moments; and
	 \item for each $\p\ge1$, the matrix $\Z=\left[\zit\right]_{1\le i\le
	K+\p,\,\rangej}$ is the
	top-left submatrix of $\mathcal{Z}$ where $\n=\n(\p)$.
	\end{itemize}
Moreover,
let $\mZ=\left[\n^{-1}\sumj z_{i\j}\right]_{i,\j}\in\Rset^{(K+\p)\times\n}$.
Then
\begin{align}
\lim_\kr \lam{1}{\frac{1}{\n}\Z\Z^\top}&= (1+\sqrt{\c})^2&
 (a.s.). \label{noncentered}\tag{1}
 \\
 \lim_{\n\to\infty}\sv{1}{\frac{1}{\sqrt\n}(\Z-\mZ)}&=1+\sqrt{\c}&(a.s.).\label{centered}\tag{2}
\end{align}
\end{prop}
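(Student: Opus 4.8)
The plan is to recognize both displays as known limit laws for the extremal eigenvalue of a large sample covariance matrix and to verify that the nested‑array hypothesis imposed on $\Z$ is exactly the one those theorems require. First I would set $\p'=K+\p$, so that $\Z$ has $\p'$ rows and $\n$ columns with $\p'/\n=(K+\p)/\n\to\c$ (because $K$ is a fixed integer), and I would record the identity $\lam{1}{\n^{-1}\Z\Z^\top}=\sv{1}{\n^{-1/2}\Z}^2$, which shows that \eqref{noncentered} and the uncentred analogue of \eqref{centered} are the same statement; the only genuine difference between the two displays is raw versus row‑centred data.

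For \eqref{noncentered} I would appeal directly to \cite[Theorem~3.1]{YBK88} (equivalently, Bai--Yin's theorem, cf.\ \cite{bai-yin2}). By hypothesis the entries of $\Z$ are the top‑left $\p'\times\n$ block of one fixed infinite array $\mathcal Z$ of i.i.d.\ centred unit‑variance random variables with finite fourth moment, so a single almost‑sure event governs the whole sequence $\p\to\infty$ — precisely the setting of that theorem. Under $\p'/\n\to\c$ it gives $\lam{1}{\n^{-1}\Z\Z^\top}\cas(1+\sqrt{\c})^2$, and reinstating $\p'=K+\p$ is immaterial, which yields \eqref{noncentered}.

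For \eqref{centered} I would write $\Z-\mZ=\Z\bigl(\I_\n-\n^{-1}\ones{\n}\ones{\n}^\top\bigr)$, so that $\n^{-1}(\Z-\mZ)(\Z-\mZ)^\top$ is, up to the factor $\n/(\n-1)\to1$, the usual data‑centred sample covariance matrix formed from the columns of $\Z$; its largest eigenvalue converges almost surely to $(1+\sqrt{\c})^2$ by \cite[(2.7)]{10.2307/25053330}, which again applies since the array is nested and $\p'/\n\to\c$. Taking square roots and absorbing $\sqrt{\n/(\n-1)}\to1$ gives \eqref{centered}.

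Beyond this bookkeeping (keeping the dimension $K+\p$, and the normalisations $\n^{-1}$ and $(\n-1)^{-1}$, straight) there is no real obstacle, since both assertions are citations. The one pitfall worth flagging is that \eqref{centered} does \emph{not} follow cheaply from \eqref{noncentered} by a rank‑one perturbation: with $\mZ=\n^{-1}\Z\ones{\n}\ones{\n}^\top$, Corollary~\ref{corol:small_size_pert_singular_v} only gives
$$\sv{1}{\n^{-1/2}(\Z-\mZ)}\ \ge\ \sv{1}{\n^{-1/2}\Z}-\sv{1}{\n^{-1/2}\mZ},$$
and $\sv{1}{\n^{-1/2}\mZ}=\n^{-1}\norm{\Z\ones{\n}}\to\sqrt{\c}$ by the strong law of large numbers, so this route yields only the too‑weak bound $\liminf\ge1$ rather than $1+\sqrt{\c}$; hence the centred statement genuinely needs the dedicated result of \cite{10.2307/25053330} (equivalently, the known almost‑sure limit of the top eigenvalue of a data‑centred sample covariance matrix).
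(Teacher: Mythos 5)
Your proposal matches the paper exactly: the paper offers no proof of Proposition~\ref{prop:lambda1C} beyond the sentence attributing \eqref{noncentered} to \cite[Theorem~3.1]{YBK88} and \eqref{centered} to \cite[(2.7)]{10.2307/25053330}, which are precisely the two citations you invoke, and your bookkeeping (replacing $\p$ by $K+\p$ under the nested-array hypothesis, the normalisation $\n$ versus $\n-1$) is correct. Your closing remark that the centred statement cannot be recovered from the uncentred one via the rank-one perturbation bound of Corollary~\ref{corol:small_size_pert_singular_v} is accurate and a worthwhile observation the paper does not make.
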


By the \emph{empirical spectral measure} of an
$\p\times\p$ semi-definite matrix $\H$, we mean a probability measure $\theta$ such that
\begin{align*}
\theta(A)&=\frac{\#\set{i\in[1,\,\p] \colon\, \lam{i}{\H}\in A}}{\p}&(A\subseteq\Rset). 
\end{align*}
If the \emph{empirical spectral measure} of $\H$ converges weakly to a probability measure $\vartheta$ in a
given limiting regime, we call $\vartheta$ the \emph{limiting spectral measure} of $\H$.

\emph{Mar\v{c}enko-Pastur probability measure} of index $c>0$ and scale parameter $s>0$ has the
probability density function
\begin{align*}
p(x)= \begin{cases}
  \frac{1}{2\pi x c s}\sqrt{(a_+-x)(x-a_-)} &(a_-\le x\le a_+),\\
  0&(\mbox{otherwise})
 \end{cases}
\end{align*}
with an additional point mass of value $\max\{0,1-1/c\}$ at the origin $x=0$,
where $a_\pm=s(1\pm\sqrt{c})^2$. $a_+$ is called the \emph{right-edge}.

\begin{lem}\label{lem:LSD}
In every CLFM,  if ${{\p,\n\to\infty,\ \krr}}\in(0,\,\infty)$,
then all the limiting spectral measures of $\S$ and $\tS$ are 
 Mar\v{c}enko-Pastur probability measure of index $c$
 and scale parameter $\sigma^2$.
\end{lem}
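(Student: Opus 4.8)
The plan is to peel off the finite-rank contribution of the factor term $\bL\bF$ and reduce the statement to the limiting spectral measure of the pure-noise Gram matrix, which is then furnished by the classical Mar\v{c}enko-Pastur theorem. Observe first that neither $\tS$ nor $\S$ depends on $\bmu$: they are built from $\X-\bM=\bL\bF+\sigma\bPsi$ and from $\X-\mX=\X P$ with the centering projection $P=\I_\n-\n^{-1}\ones{\n}\ones{\n}^\top$, and $\bM P=\o$ because $\bM$ has constant columns. Recall also that $\bLambda=\sigma\I$ for a CLFM.

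I would begin with the base case. Since the entries of $\bPsi$ are i.i.d., centered, with unit variance (Definition~\ref{def:K-FM}) and finite fourth moment (\ref{ass:moment4}) --- finite variance already suffices here --- the classical Mar\v{c}enko-Pastur theorem \citep{bai2010spectral} gives that, as $\kr$ with $\krr\in(0,\infty)$, the empirical spectral measure of $\n^{-1}\bPsi\bPsi^\top$ converges almost surely to the Mar\v{c}enko-Pastur measure of index $c$ and scale parameter $1$. Scaling a Hermitian matrix by $\sigma^2$ scales every eigenvalue by $\sigma^2$, so the empirical spectral measure of $\sigma^2\n^{-1}\bPsi\bPsi^\top$ converges almost surely to the Mar\v{c}enko-Pastur measure of index $c$ and scale $\sigma^2$.

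Next I would transfer this to $\tS$ and then to $\S$. Writing $\A=\n^{-1/2}(\bL\bF+\sigma\bPsi)$ and $\B=\sigma\n^{-1/2}\bPsi$ gives $\tS=\A\A^\top$, $\sigma^2\n^{-1}\bPsi\bPsi^\top=\B\B^\top$, and $\A-\B=\n^{-1/2}\bL\bF$, a matrix of rank at most $\rank\bL\le K$, which is fixed. The rank (difference) inequality for empirical spectral distribution functions, \cite[Theorem~A.44]{bai2010spectral}, then bounds the Kolmogorov distance between the empirical spectral distribution functions of $\tS$ and of $\sigma^2\n^{-1}\bPsi\bPsi^\top$ by $K/\p\to0$, so $\tS$ shares the limiting spectral measure found above. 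For $\S$, the identity $\S=(\n-1)^{-1}(\bL\bF+\sigma\bPsi)P(\bL\bF+\sigma\bPsi)^\top$ (using $P^2=P=P^\top$) lets me run the same argument, now subtracting $(\n-1)^{-1/2}\bL\bF P$, again of rank $\le K$, so that $\S$ has the same limiting spectral measure as $\sigma^2(\n-1)^{-1}\bPsi P\bPsi^\top$. Two routine reductions finish this: $\bPsi P\bPsi^\top=\bPsi\bPsi^\top-\n^{-1}(\bPsi\ones{\n})(\bPsi\ones{\n})^\top$ differs from $\bPsi\bPsi^\top$ by a rank-one Hermitian matrix, so the same rank inequality applies, and the scalar factor $\frac{\n}{\n-1}\to1$ cannot alter a weak limit.

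I do not anticipate a genuine obstacle; the only point needing care is the bookkeeping that every factor-induced perturbation has rank uniformly bounded by $\rank\bL\le K$ (it always carries an $\bL$ on at least one side), together with the innocuous handling of the data-centering, which contributes merely one additional rank-one perturbation and the scalar $\n/(\n-1)$. If one prefers to avoid the rectangular difference inequality, the same conclusion follows by expressing $\tS-\sigma^2\n^{-1}\bPsi\bPsi^\top$ (and its analogue for $\S$) as a Hermitian matrix of rank $\le 3K$ and applying the Hermitian rank inequality \cite[Theorem~A.43]{bai2010spectral} instead.
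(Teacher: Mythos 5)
Your proof is correct. The paper itself gives no details for this lemma (it simply defers to \cite[Section~7]{Akama23:_correl}), and your argument --- reduce to the Mar\v{c}enko--Pastur law for $\sigma^2\n^{-1}\bPsi\bPsi^\top$ and absorb the factor term, the data-centering projection, and the $\n/(\n-1)$ normalization via rank inequalities (\cite[Theorems~A.43--A.44]{bai2010spectral}) and a vanishing scalar correction --- is exactly the standard route that citation encapsulates, so it serves as a complete, self-contained substitute for the paper's one-line proof.
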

\begin{proof}
Proved likewise as in~\cite[Section~7]{Akama23:_correl}.
\end{proof}
\end{document}